\newcommand{\cal}{\mathcal}
\newcommand{\bC}{{\Bbb C}}
\newcommand{\bE}{{\Bbb E}}
\newcommand{\bL}{{\Bbb L}}
\newcommand{\bP}{{\Bbb P}}
\newcommand{\bZ}{{\Bbb Z}}
\newcommand{\cB}{{\cal B}}
\newcommand{\cD}{{\cal D}}
\newcommand{\cM}{{\cal M}}
\newcommand{\cO}{{\cal O}}
\newcommand{\cQ}{{\cal Q}}
\newcommand{\cT}{{\cal T}}
\newcommand{\cU}{{\cal U}}
\newcommand{\Mbar}{\overline{\cM}}
\DeclareMathOperator{\Gcd}{gcd}
\DeclareMathOperator{\Aut}{Aut}
\DeclareMathOperator{\Ext}{Ext}
\DeclareMathOperator{\val}{val}
\DeclareMathOperator{\Br}{Br}
\DeclareMathOperator{\rk}{rk}
\newcommand{\vir}{ {\mathrm{vir}} }
\newcommand{\tpi  }{\tilde{\pi}  }
\newcommand{\tB}{\tilde{B}}
\newcommand{\Mb}{\Mbar^\bullet_{\chi, \gamma}(\bP^1_{a}, \mu)}
\newcommand{\MP}{\Mbar^\bullet_\chi(\bP^1,\nu,\mu)}
\newcommand{\MQ}{\Mbar^\bullet_{\chi^1}(\bP^1,\nu,\mu)}
\newcommand{\Md}{\Mbar^\bullet_{\chi^0, \gamma-\nu}(\cB\bZ_{a})}
\newcommand{\xm}[1]{{#1}^\bullet_{\chi,\nu,\mu}}
\newcommand{\xn}[1]{{#1}^\bullet_{\chi,\mu,\gamma}}
\newcommand{\xo}[1]{{#1}^\bullet_{\mu}}
\newcommand{\amm}{|\Aut(\nu)||\Aut(\mu)|}
\newcommand{\am}{|\Aut(\mu)|}
\newtheorem{theorem}{Theorem}[section]
\newtheorem{Theorem}{Theorem}
\newtheorem{proposition}[theorem]{Proposition}
\newtheorem{lemma}[theorem]{Lemma}
\theoremstyle{remark}
\theoremstyle{definition}
\begin{document}
\title[Generalized Mari\~{n}o-Vafa Formula]{Generalized Mari\~{n}o-Vafa Formula and Local Gromov-Witten Theory of Orbi-curves}
\author{Zhengyu Zong}
\address{Department of Mathematics, Columbia University, New York, NY 10027, USA}
\email{zz2197@math.columbia.edu}
\begin{abstract}
We prove a generalized Mari\~{n}o-Vafa formula for Hodge integrals over $\Mbar_{g, \gamma-\mu}(\cB G)$ with $G$ an arbitrary finite abelian group. This formula can be viewed as a formula for the one-leg orbifold Gromov-Witten vertex where the leg is effective. We will prove the orbifold Gromov-Witten/Donaldson-Thomas correspondence between our formula and the formula for the orbifold DT vertex in \cite{Bry-Cad-You}. We will also use this formula to study the local Gromov-Witten theory of an orbi-curve with cyclic stack points in a Calabi-Yau three-orbifold.
\end{abstract}
\maketitle
\section{Introduction}\label{introduction}

The Gromov-Witten/Donaldson-Thomas correspondence conjectured in \cite{MNOP1,MNOP2} states that the GW theory and the DT theory of a smooth 3-fold are equivalent after a change of variables. By the results in \cite{MNOP1,Oko-Vaf}, this correspondence for smooth toric 3-folds is equivalent to the algorithm of the topological vertex \cite{AKMV}. The GW/DT correspondence for smooth toric 3-folds is proved in \cite{MOOP}. For smooth toric Calabi-Yau 3-folds, the GW theory is obtained by gluing the GW vertex, a generating function of cubic Hodge integrals, and the DT theory is obtained by gluing the DT vertex, a generating function of 3d partitions. The GW/DT correspondence for smooth toric Calabi-Yau 3-folds can be reduced to the correspondence between the GW vertex and the DT vertex. The Mari\~{n}o-Vafa formula, conjectured in \cite{Mar-Vaf} and proved in \cite{Liu-Liu-Zhou1,Oko-Pan}, can be viewed as a formula for the framed 1-leg GW vertex; it implies the correspondence between the 1-leg GW vertex and the 1-leg DT vertex.

A vertex formalism for the orbifold DT theory (resp. orbifold GW theory) of toric Calabi-Yau 3-orbifolds is established in \cite{Bry-Cad-You} (resp. \cite{Ros}). For toric Calabi-Yau 3-orbifolds, the orbifold GW theory is obtained by gluing the GW orbifold vertex, a generating function of cubic abelian Hurwitz-Hodge integrals, and the orbifold DT theory is obtained by gluing the DT orbifold vertex, a generating function of colored 3d partitions. J. Bryan showed the orbifold GW theory of the local footballs (computed in \cite{Joh-Pan-Tse}) and the orbifold DT theory of the local footballs (computed in \cite{Bry-Cad-You}) are equivalent after a change of variables \cite{Bry}. It is natural to ask if the GW orbifold vertex and the DT orbifold vertex are equivalent after a change of variables.

In this paper, we prove a generalized Mari\~{n}o-Vafa formula, which can be viewed as a formula for the framed 1-leg GW orbifold vertex where the leg is effective, and prove a GW/DT correspondence in this case. The correspondence for the 1-leg $\bZ_2$ vertex is proved in \cite{Ros}. We also use our generalized Mari\~{n}o-Vafa formula to study the local Gromov-Witten theory of an orbi-curve with cyclic stack points in a Calabi-Yau three-orbifold.

\subsection{ Mari\~{n}o-Vafa formula for $\bZ_a$}
Fix an integer $a\geq 1$. Let $\Mbar_{g, \gamma}(\cB\bZ_a)$ be the moduli space of stable maps to $\cB\bZ_a$ where $\gamma=(\gamma_{1}, \cdots, \gamma_{n})$ is a vector of elements in $\bZ_a$. Let $U$ be the irreducible representation of $\bZ_a$ given by
$$\phi^U:\bZ_a\to \bC^*,\phi^U(1)=e^{\frac{2\pi\sqrt{-1}}{a}}.$$
Then there is a corresponding Hodge bundle
$$E^U\to\Mbar_{g, \gamma}(\cB\bZ_a)$$
and the corresponding Hodge classes on $\Mbar_{g, \gamma}(\cB\bZ_a)$ are defined by Chern classes of $E^U$,
$$\lambda^U_i=c_i(E^U).$$
Similarly, for any irreducible representation $R$ of $\bZ_a$, we have a corresponding Hodge bundle $E^R$ and Hodge classes $\lambda^R_i$. Let $\Mbar_{g,n}$ be the moduli space of stable curves of genus $g$ with $n$ marked points and let $\psi_i$ be the $i^{th}$ descendent class on $\Mbar_{g,n}$, $1\leq i\leq n$. Let
$$\epsilon :\Mbar_{g, \gamma}(\cB\bZ_a)\to \Mbar_{g,n}$$
be the canonical morphism. Then the descendent classes $\bar{\psi}_i$ on $\Mbar_{g, \gamma}(\cB\bZ_a)$ are defined by
$$\bar{\psi}_i=\epsilon^*(\psi_i).$$
Let
$$
\Lambda^{\vee,R}_g(u)=u^{\rk E^R}-\lambda_1^R u^{\rk E^R-1} +\cdots+(-1)^{\rk E^R}\lambda_{\rk E^R}^R,
$$
where $\rk E^R$ is the rank of $E^R$ determined by the orbifold Riemann-Roch formula.

Let $d$ be a positive integer and let $\mu=(\mu_{1}\geq\cdots\geq\mu_{l(\mu)}>0)$ be a partition of $d>0$ which means $|\mu|:=\sum_{i=1}^{l(\mu)}\mu_i=d$. Now we require $\gamma=(\gamma_{1}, \cdots, \gamma_{n})$ to be a vector of \emph{nontrivial} elements in $\bZ_a$ and view $\mu$ as a vector of elements in $\bZ_a$. Then for $\tau\in \bZ$, we define $G_{g,\mu,\gamma}(\tau)_{a}$ as
\begin{eqnarray*}
&&\frac{\sqrt{-1}^{l(\mu)-|\mu|+2\sum_{i=1}^{l(\mu)}[\frac{\mu_i}{a}]}
\tau ^{l(\mu)-1}a^{l(\mu)-\sum_{i=1}^{l(\mu)}\delta_{0,\langle \frac{\mu_i}{a}\rangle}}}{\am|\Aut(\gamma)|}
\prod_{i=1}^{l(\mu)}\frac{\prod_{l=1}^{[\frac{\mu_i}{a}]}(\mu_i\tau+l)}{[\frac{\mu_i}{a}]!}
\\
&&\cdot\int_{\Mbar_{g, \gamma-\mu}(\cB\bZ_a)}\frac{\left(-\frac{1}{a}(\tau+\frac{1}{a})\right)^{-\delta}
\Lambda_{g}^{\vee,U}(\frac{1}{a})\Lambda_{g}^{\vee,U^\vee}(-\tau-\frac{1}{a})
\Lambda_{g}^{\vee,1}(\tau)}{\prod_{i=1}^{l(\mu)}(1-\mu_i\bar{\psi}_i)},
\end{eqnarray*}
where $\gamma-\mu$ denotes the vector $(\gamma_{1}, \cdots, \gamma_{n},-\mu_1,\cdots,-\mu_{l(\mu)})$, $\bar{\psi}_i$ corresponds to $\mu_i$, $U^\vee$ and 1 denote the dual of $U$ and the trivial representation respectively, $[x]$ denotes the integer part of $x$, $\langle x\rangle=x-[x]$,
$$\delta_{0,x}=\left\{\begin{array}{ll}1, &x=0,\\
0, &x\neq 0,\end{array} \right.$$
and
$$\delta=\left\{\begin{array}{ll}1, &\textrm{if all monodromies around loops on the domain curve are trivial,}\\
0, &\textrm{otherwise}.\end{array} \right.$$
Introduce formal variables $p=(p_1,p_2,\ldots,p_n,\ldots),x=(x_1,\ldots,x_{a-1})$ and define
$$
p_\mu=p_{\mu_1}\cdots p_{\mu_{l(\mu)} },x_\gamma=x_{\gamma_1}\cdots x_{\gamma_n}
$$
for a partition $\mu$. Define
generating functions
\begin{eqnarray*}
G_{\mu,\gamma}(\lambda;\tau)_{a}&=&\sum_{g=0}^{\infty}\lambda^{2g-2+l(\mu)}G_{g,\mu,\gamma}(\tau)_{a}\\
G(\lambda;\tau;p;x)_{a}&=&\sum_{\mu\neq \emptyset,\gamma}G_{\mu,\gamma}(\lambda;\tau)_{a}p_\mu x_\gamma=\sum_{\mu\neq \emptyset}G_{\mu}(\lambda;\tau;x)_{a}p_\mu=\sum_{\gamma}G_{\gamma}(\lambda;\tau;p)_{a}x_\gamma\\
G^\bullet(\lambda;\tau;p;x)_{a}&=&\exp(G(\lambda;\tau;p;x)_{a})=
\sum_{\mu,\gamma}G^\bullet_{\mu,\gamma}(\lambda;\tau)_{a}p_\mu x_\gamma=
1+\sum_{\mu\neq\emptyset,\gamma}G^\bullet_{\mu,\gamma}(\lambda;\tau)_{a}p_\mu x_\gamma\\
&=&1+\sum_{\mu\neq\emptyset}G^\bullet_{\mu}(\lambda;\tau;x)_{a}p_\mu\\
G^\bullet_{\mu,\gamma}(\lambda;\tau)_{a}&=&\sum_{\chi\in 2\bZ,\chi\leq 2l(\mu)}\lambda^{-\chi+l(\mu)}G^\bullet_{\chi,\mu,\gamma}(\tau)_{a}.
\end{eqnarray*}

Our $G(\lambda;\tau;p;x)_{a}$ corresponds to the framed 1-leg orbifold Gromov-Witten vertex where the leg is effective. In this paper, we will prove the orbifold GW/DT correspondence between our GW vertex $G(\lambda;0;p;x)_a$ and the 1-leg DT vertex $V_{\nu\emptyset\emptyset}^a(q,q_1,\cdots,q_{a-1})$ which appears in Example 4.2 in \cite{Bry-Cad-You}.

More precisely, let $V_{\nu\emptyset\emptyset}^{'a}(q,q_1,\cdots,q_{a-1})=
\frac{V_{\nu\emptyset\emptyset}^a(q,q_1,\cdots,q_{a-1})}{V_{\emptyset\emptyset\emptyset}^a(q,q_1,\cdots,q_{a-1})}$ be the corresponding reduced DT vertex. Then we have the following theorem:\\
\\
\textbf{Theorem 2.1}: Under the change of variables $q=-e^{\sqrt{-1}\lambda},q_l=\xi_a^{-1}e^{-\sum_{i=1}^{a-1}\frac{\omega_a^{-2il}}{a}(\omega_a^i-\omega_a^{-i})x_i},l=1,
\cdots,a-1$ we have
\begin{eqnarray*}
G^\bullet_{\mu}(\lambda;0;x)_{a}&=&\sum_{|\nu|=|\mu|}(-1)^{A_\nu(0,a)}q^{\frac{|\mu|}{2}+A_\nu(0,a)}q_1^{-\frac{d}{a}
+A_\nu(1,a)-A_\nu(0,a)}\cdots q_{a-1}^{-\frac{d(a-1)}{a}+A_\nu(a-1,a)-A_\nu(0,a)}\\
&&V_{\nu\emptyset\emptyset}^{'a}(-q,q_1,\cdots,q_{a-1})
\frac{\chi_{\nu}(\mu)}{z_\mu},
\end{eqnarray*}
where $\xi_a=e^{\frac{2\pi i}{a}},\omega_a=e^{\frac{\pi i}{a}}$ and $z_\mu=|\Aut(\mu)|\mu_1\cdots\mu_{l(\mu)}$.

For a partition $\mu$, $\chi_{\mu}$ denotes the character of the irreducible representation of $S_d$
indexed by $\mu$, where $d=|\mu| = \sum_{i=1}^{l(\mu)} \mu_i$. Let
$$\kappa_{\mu} = |\mu| + \sum_i (\mu_i^2 - 2i\mu_i).$$
Then under the above change of variables, let
\begin{eqnarray*}
R^\bullet_{\mu}(\lambda;\tau;x)_{a}&=&\sum_{|\nu|=|\mu|}(q^{\frac{1}{2}}q_1^{-\frac{1}{a}}\cdots q_{a-1}^{-\frac{a-1}{a}})^{|\nu|}\sum_{|\xi|=|\nu|}s_{\xi'}(-q_{\bullet})\chi_{\xi}(\nu)
\Phi^\bullet_{\nu,\mu}(\sqrt{-1}\lambda\tau)
\end{eqnarray*}
where
\begin{eqnarray*}
\Phi^\bullet_{\nu,\mu}(\lambda)=\sum_{\eta}\frac{\chi_{\eta}(\nu)}{z_{\nu}} \frac{\chi_{\eta}(\mu)}{z_{\mu}}e^{\frac{\kappa_{\eta}\lambda}{2}},
\end{eqnarray*}
$s_{\xi'}$ is the Schur polynomial corresponding to the conjugate of $\xi$, $-q_{\bullet}=(-Q,-Qq_{a-1},\cdots, -Qq_1\cdots q_{a-1})$, and $-Q=(1,-q,(-q)^2,(-q)^3,\cdots)$.

$\Phi^\bullet_{\nu,\mu}(\lambda)$ satisfies the following properties (see \cite{Liu-Liu-Zhou2}),

\begin{equation}\label{eqn:Comp}
  \Phi^{\bullet}_{\nu, \mu}(\lambda_1+\lambda_2)
=  \sum_{\sigma} \Phi^{\bullet}_{\nu, \sigma}(\lambda_1)
\cdot z_{\sigma} \cdot \Phi^{\bullet}_{\sigma, \mu}(\lambda_2),
\end{equation}
\begin{equation}\label{eqn:Init}
 \Phi^{\bullet}_{\nu, \mu}(0) = \frac{1}{z_{\nu}}\delta_{\nu, \mu}.
\end{equation}
By (\ref{eqn:Comp}), (\ref{eqn:Init}) we have
\begin{eqnarray*}
R^\bullet_{\mu}(\lambda;0;x)_{a}&=&(q^{\frac{1}{2}}q_1^{-\frac{1}{a}}\cdots q_{a-1}^{-\frac{a-1}{a}})^{|\mu|}\sum_{|\nu|=|\mu|}s_{\nu'}(-q_{\bullet})\frac{\chi_{\nu}(\mu)}{z_\mu},
\end{eqnarray*}
\begin{equation}\label{eqn:R}
R^\bullet_{\mu}(\lambda;\tau;x)_{a}=\sum_{|\nu|=|\mu|}R^\bullet_{\nu}(\lambda;0;x)_{a}z_{\nu}
\Phi^\bullet_{\nu,\mu}(\sqrt{-1}\lambda\tau).
\end{equation}

We also define generating functions

\begin{eqnarray*}
R^\bullet(\lambda;\tau;p;x)_{a}&=&\sum_{\mu,\gamma}R^\bullet_{\mu}(\lambda;\tau;x)_{a}p_\mu,\\
R(\lambda;\tau;p;x)_{a}&=&\log R^\bullet(\lambda;\tau;p;x)_{a}=\sum_{\mu\neq\emptyset,\gamma}R_{\mu,\gamma}(\lambda;\tau)_{a}p_\mu x_\gamma\\
&=&\sum_{\mu\neq \emptyset}R_{\mu}(\lambda;\tau;x)_{a}p_\mu=\sum_{\gamma}R_{\gamma}(\lambda;\tau;p)_{a}x_\gamma.
\end{eqnarray*}
\\

The Mari\~{n}o-Vafa formula for $\bZ_a$ is the following theorem.
\\
\begin{Theorem}[Mari\~{n}o-Vafa formula for $\bZ_a$]\label{MVza}
\begin{eqnarray*}
G(\lambda;\tau;p;q)_{a}=R(\lambda;\tau;p;x)_{a}.
\end{eqnarray*}
\end{Theorem}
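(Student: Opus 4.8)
The plan is to prove the identity by the cut-and-join method of Liu--Liu--Zhou \cite{Liu-Liu-Zhou1}, adapted to the $\cB\bZ_a$-orbifold setting. Since $G^\bullet=\exp(G)$ and $R^\bullet=\exp(R)$, it is equivalent to establish $G^\bullet(\lambda;\tau;p;x)_a=R^\bullet(\lambda;\tau;p;x)_a$, and for this I would show that both disconnected generating functions satisfy one and the same linear evolution equation in $\tau$, the cut-and-join equation $\frac{\partial}{\partial\tau}F=\frac{\sqrt{-1}\lambda}{2}\,(\mathrm{CJ})\,F$, where $\mathrm{CJ}$ is the cut-and-join operator acting on the $p$-variables, and that they share the same value at $\tau=0$. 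By uniqueness of the solution of this first-order linear system with prescribed initial data, the equality then propagates to all $\tau$.

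That $R^\bullet$ satisfies the cut-and-join equation is formal and follows from the structure already recorded in the excerpt. The kernel $\Phi^\bullet_{\nu,\mu}(\lambda)=\sum_\eta\frac{\chi_\eta(C(\nu))}{z_\nu}\frac{\chi_\eta(C(\mu))}{z_\mu}e^{\kappa_\eta\lambda/2}$ is precisely the matrix element of $\exp(\frac{\lambda}{2}\mathrm{CJ})$ in the power-sum basis, since $\mathrm{CJ}$ is diagonalized by the irreducible characters with eigenvalue $\kappa_\eta$. Differentiating the explicit solution (\ref{eqn:R}) in $\tau$ and invoking the composition law (\ref{eqn:Comp}) together with the initial value (\ref{eqn:Init}) yields the cut-and-join equation for $R^\bullet$, with initial datum $R^\bullet_\mu(\lambda;0;x)_a=(q^{1/2}q_1^{-1/a}\cdots q_{a-1}^{-(a-1)/a})^{|\mu|}\sum_{|\nu|=|\mu|}s_{\nu'}(-q_\bullet)\frac{\chi_\nu(\mu)}{z_\mu}$ already computed in the excerpt.

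The main obstacle is to prove the \emph{same} cut-and-join equation on the geometric side. Here I would realize $G^\bullet$ through virtual localization on the moduli of relative orbifold stable maps to $\bP^1_a$, the projective line carrying a single $\bZ_a$ stacky point and a smooth relative divisor supporting the ramification profile $\mu$, with twisted markings recorded by $\gamma$. Under the canonical $\bC^*$-action the fixed loci factor into vertex contributions over $\cB\bZ_a$, which reproduce the Hurwitz--Hodge integrals $G_{g,\mu,\gamma}(\tau)_a$ (the $a$-dependent prefactors and the exponent $\delta$ being dictated by the orbifold Riemann--Roch ranks of the twisted Hodge bundles $E^U$, $E^{U^\vee}$, $E^1$), glued along edges to rubber contributions whose $\tau$-dependence is the framing. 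The $\tau$-derivative is then controlled by the degeneration/gluing formula along the relative divisor: the two ways in which a node can split or merge parts of the profile assemble, after resumming over $\chi$ and over all profiles, into exactly the quadratic differential operator $\mathrm{CJ}$ in the $p_k$. Carrying this out cleanly, tracking the monodromy labels and orbifold normalizations so that the geometric convolution matches (\ref{eqn:Comp}), is the technical heart of the argument.

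It remains to match the two sides at $\tau=0$. When $\tau=0$ the integrand collapses: the factor $\Lambda^{\vee,1}_g(\tau)$ degenerates to (a sign times) the top Chern class of the untwisted Hodge bundle $E^1$, so $G^\bullet_\mu(\lambda;0;x)_a$ becomes an explicitly computable Hurwitz--Hodge integral. I would evaluate it by the orbifold Hodge-integral (``$\lambda_g$-type'') evaluation together with the principal specialization of Schur functions, recovering precisely the expression for $R^\bullet_\mu(\lambda;0;x)_a$ displayed above; equivalently this is the $\tau=0$ content of the GW/DT correspondence of Theorem 2.1, identifying the reduced one-leg DT vertex $V_{\nu\emptyset\emptyset}^{'a}$ with $s_{\nu'}(-q_\bullet)$ under the stated change of variables. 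With the initial condition in hand, uniqueness for the cut-and-join system forces $G^\bullet=R^\bullet$, and hence $G=R$.
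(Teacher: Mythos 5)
Your proposal shares its technical core with the paper: the paper, too, evaluates the $\tau=0$ value by a direct Hurwitz--Hodge computation (Section 2, where Mumford's relation, the degree formula of \cite{JPT}, the Faber--Pandharipande evaluation of $\int_{\Mbar_{g,1}}\lambda_g\psi^{2g-2}$, and a Schur-function expansion give $G^\bullet_\mu(\lambda;0;x)_a=R^\bullet_\mu(\lambda;0;x)_a$), and it, too, runs virtual localization on $\Mbar^\bullet_{\chi,\gamma}(\bP^1_a,\mu)$ with vertex contributions producing the Hurwitz--Hodge integrals and rubber contributions producing double Hurwitz numbers. Where you genuinely diverge is the transfer mechanism from $\tau=0$ to all $\tau$. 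The paper's key lemma is the convolution identity of Theorem \ref{key}, $K^\bullet_\mu(\lambda;x)=\sum_{|\nu|=|\mu|}G^\bullet_\nu(\lambda;\tau;x)_a\,z_\nu\,\Phi^\bullet_{\nu,\mu}(-\sqrt{-1}\tau\lambda)$, whose left-hand side is a topological, hence $\tau$-independent, relative invariant; inverting it with the character identities (\ref{eqn:Comp}), (\ref{eqn:Init}) and comparing with (\ref{eqn:R}) finishes the proof purely algebraically, with no differential equation and no need to treat $\tau$ (a priori an integer, being an equivariant-lift parameter) as a continuous variable. Your route differentiates in $\tau$ and invokes uniqueness for the cut-and-join ODE, in the style of the original proof in \cite{Liu-Liu-Zhou1}; this is valid, but it requires first extending $G^\bullet$ to a rational function of $\tau$ and, more to the point, the cut-and-join equation for $G^\bullet$ is literally the $\tau$-derivative of the convolution identity, so you must prove that identity anyway and the ODE layer buys nothing.

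One step as you state it would not go through. You say the $\tau$-derivative of $G^\bullet$ ``is controlled by the degeneration/gluing formula along the relative divisor.'' It is not: $\tau$ is not a geometric parameter of the relative theory of $(\bP^1_a,\infty)$ but the framing, i.e.\ the choice of weights in the $\bC^*$-lift to the obstruction bundle (Section 3.4 of the paper), and degeneration formulas are blind to it. What actually yields the cut-and-join equation is the conjunction of (i) the $\tau$-independence of the relative invariant $K^\bullet_\mu$ and (ii) the heat-kernel property of $\Phi^\bullet$ (equivalently, the classical cut-and-join recursion for the double Hurwitz numbers entering through the Burnside formula), which together are exactly the content of Theorem \ref{key} and (\ref{eqn:Comp}). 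Once you have those, your ODE step is a corollary and your proof collapses into the paper's; without them, the ``split or merge'' heuristic is not a proof. Your $\tau=0$ paragraph is correct and matches Section 2, including the identification of the initial value with the reduced DT vertex $V^{'a}_{\nu\emptyset\emptyset}$ of Theorem \ref{correspondence}.
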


When $a=1$ (and hence $x=\emptyset$),
\begin{eqnarray*}
R^\bullet_{\mu}(\lambda;0;\emptyset)_{a}&=&
(q^{\frac{1}{2}})^{|\mu|}\sum_{|\nu|=|\mu|}s_{\nu'}((1,-q,(-q)^2,(-q)^3,\cdots)\frac{\chi_{\nu}(\mu)}{z_\mu}\\
&=&\sum_{|\nu|=|\mu|} \frac{\chi_{\nu}(\mu)}{z_{\mu}}e^{\frac{\sqrt{-1}}{4}\kappa_{\nu}\lambda} V_{\nu}(\lambda)
\end{eqnarray*}
where
\begin{eqnarray*}
V_{\nu}(\lambda) &= & \prod_{1 \leq a < b \leq l(\nu)}
\frac{\sin \left[(\nu_a - \nu_b + b - a)\lambda/2\right]}{\sin \left[(b-a)\lambda/2\right]} \\
&& \cdot\frac{1}{\prod_{i=1}^{l(\nu)}\prod_{v=1}^{\nu_i} 2 \sin \left[(v-i+l(\nu))\lambda/2\right]}.
\end{eqnarray*}
$V_{\nu}(\lambda)$ has an interpretation in terms of quantum dimensions (see \cite{Liu-Liu-Zhou1} and \cite{Oko-Pan})
$$V_{\nu}(\lambda)=\sqrt{-1}^{|\nu|}\frac{\textrm{dim}_{q}R_{\nu}}{|\nu|!}=
\frac{\sqrt{-1}^{|\nu|}}{\prod_{x\in\nu}(q^{h(x)/2}-q^{-h(x)/2})},$$
where $q=e^{\sqrt{-1}\lambda}$, $\textrm{dim}_{q}R_{\nu}$ is the quantum dimension of the irreducible representation of $S_d$ index by $\nu$, the product is over all the boxes $x$ in the Young diagram associated to the partition $\nu$, and $h(x)$ is the hook length of the box $x$. In this case, Theorem \ref{MVza} is the original Mari\~{n}o-Vafa formula in \cite{Liu-Liu-Zhou1} and \cite{Oko-Pan}. In \cite{Liu}, $G(\lambda;\tau;p;\emptyset)_{1}$ and $R(\lambda;\tau;p;\emptyset)_{1}$ are denoted by $G(\lambda;\tau;p)$ and $R(\lambda;\tau;p)$ respectively.

In section 3, we will define a generating function $\xo{K}(\lambda;x)$ of relative Gromov-Witten invariants of $(\bP^1_a,\infty)$. In section 4, we will prove the following theorem, which gives Theorem \ref{MVza}.

\begin{Theorem}\label{key}
\begin{eqnarray*}
\xo{K}(\lambda;x)=\sum_{|\nu|=|\mu|}G^\bullet_{\nu}(\lambda;\tau;x)_{a}z_{\nu}
\Phi^\bullet_{\nu,\mu}(-\sqrt{-1}\tau\lambda).
\end{eqnarray*}
\end{Theorem}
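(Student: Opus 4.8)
The plan is to compute the relative Gromov--Witten generating function $\xo{K}(\lambda;x)$ of $(\bP^1_{a},\infty)$ defined in Section 3 directly by virtual localization and to read off the convolution on the right-hand side from the two families of $\bC^*$-fixed loci. Equip $\bP^1_{a}$ with the standard $\bC^*$-action fixing the stacky point $0$, where the $\cB\bZ_{a}$ structure sits, and the smooth relative point $\infty$, and lift this action to the moduli space $\Mb$ of relative stable maps. The framing $\tau\in\bZ$ will enter as the equivariant linearization of the auxiliary bundle twisting the obstruction theory; since the underlying relative invariant $\xo{K}$ is an honest number independent of this choice, the fixed-point expansion will produce one identity for every value of $\tau$, which is precisely the $\tau$-family asserted in the statement.

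First I would classify the $\bC^*$-fixed stable maps. As in the Liu--Liu--Zhou analysis of the $a=1$ case, a fixed map collapses all genus and all orbifold structure to the two fixed points, which are joined by rational ``tube'' components that multiply-cover the main orbit and are totally ramified over $0$ and $\infty$; the ramification profile over $0$ is a partition $\nu$ with $|\nu|=|\mu|$, and this $\nu$ is exactly the node profile along which the two ends are glued. The localization contribution therefore factors, for each $\nu$, into a piece supported over $0$ and a piece supported over $\infty$, summed over $\nu$ with the gluing factor $z_{\nu}$ coming from the edge weights and the automorphisms of the tubes.

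Next I would identify the two factors. Over the stacky fixed point $0$ the contracted components map to $\cB\bZ_{a}$, so the integral is a Hurwitz--Hodge integral over $\Mbar_{g, \gamma-\nu}(\cB\bZ_{a})$; assembling the node-smoothing factors $\prod_i(1-\nu_i\bar{\psi}_i)^{-1}$ with the equivariant Euler class of the obstruction bundle reproduces exactly the vertex $G^\bullet_{\nu}(\lambda;\tau;x)_{a}$. Here the factor $\Lambda_g^{\vee,U}(\tfrac1a)\Lambda_g^{\vee,U^\vee}(-\tau-\tfrac1a)\Lambda_g^{\vee,1}(\tau)$ arises from the $\bZ_{a}$-weight decomposition of the Hodge bundle into its $U$, $U^\vee$ and trivial isotypic pieces, with the arguments $\tfrac1a$, $-\tau-\tfrac1a$, $\tau$ read off from the orbifold tangent and normal directions of the local geometry at $0$ together with the framing. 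Over the relative point $\infty$ the remaining data is a rubber integral recording the profile $\nu$ at the node and $\mu$ along the relative divisor; rigidifying the rubber and evaluating the resulting Hodge integrals sums to $z_{\nu}\,\Phi^\bullet_{\nu,\mu}(-\sqrt{-1}\tau\lambda)$, the disconnected double-Hurwitz propagator whose $\kappa$-weighted generating series is recorded in \cite{Liu-Liu-Zhou2}. Summing the glued contributions over $\nu$ yields the stated formula.

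The main obstacle will be the precise evaluation of the $\infty$-factor and its identification with $z_{\nu}\Phi^\bullet_{\nu,\mu}$: the rubber calculus must be organized so that the dependence on the framing weight collapses to the single argument $-\sqrt{-1}\tau\lambda$, and the orbifold bookkeeping at $0$ (the inertia components indexed by $\gamma$, the fractional contributions $\mu_i/a$, and the orbifold Riemann--Roch ranks $\rk E^{R}$) must be tracked with the correct signs and powers of $a$. A useful internal check is that, by the composition law (\ref{eqn:Comp}) and the initial condition (\ref{eqn:Init}), the right-hand side is automatically independent of $\tau$ with value $G^\bullet_{\mu}(\lambda;0;x)_{a}$ at $\tau=0$, matching the $\tau$-free left-hand side and confirming the structure of the localization graph sum.
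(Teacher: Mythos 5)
Your proposal follows essentially the same route as the paper: $\bC^*$-localization on $\Mbar^\bullet_{\chi,\gamma}(\bP^1_a,\mu)$ with the framing $\tau$ entering through the linearization of the obstruction bundle, factorization of each fixed locus into a Hurwitz--Hodge piece over the stacky point $0$ (yielding $G^\bullet_{\nu}(\lambda;\tau;x)_a$) and a rubber piece over $\infty$ (yielding double Hurwitz numbers, resummed into $\Phi^\bullet_{\nu,\mu}(-\sqrt{-1}\tau\lambda)$ via the Burnside formula), glued by $z_\nu$. The only slight inaccuracy is attributing all three factors $\Lambda^{\vee,U}(\tfrac1a)\Lambda^{\vee,U^\vee}(-\tau-\tfrac1a)\Lambda^{\vee,1}(\tau)$ to an isotypic decomposition of a single Hodge bundle: in the paper they arise separately from the virtual normal bundle, from $V_{D_d}=R^1\pi_*\tilde F^*\cO_{\bP^1_a}(-p_0)$, and from $V_D=R^1\pi_*\cO_{\cU}(-\cD)$, respectively.
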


By (\ref{eqn:Comp}), (\ref{eqn:Init}), Theorem \ref{key} is equivalent to
$$G^\bullet_{\mu}(\lambda;\tau;x)_{a}=\sum_{|\nu|=|\mu|}K^\bullet_{\nu}(\lambda;x)z_{\nu}
\Phi^\bullet_{\nu,\mu}(\sqrt{-1}\tau\lambda),\ \ \ G^\bullet_{\mu}(\lambda;0;x)_{a}=\xo{K}(\lambda;x).$$
So Theorem \ref{MVza} follows from (\ref{eqn:R}) and the initial condition $G^\bullet_{\mu}(\lambda;0;x)_{a}=R^\bullet_{\mu}(\lambda;0;x)_{a}$.

\subsection{Abelian group $G$}

Let $G$ be an finite abelian group and let $R$ be an irreducible representation
$$\phi^{R}:G\to \bC^*$$
with associated short exact sequence
$$0\to K\to G\stackrel{\phi^{R}}{\to}\textrm{Im}(\phi^{R})\cong \bZ_a\to 0.$$
Let $\bar{\mu}=(\bar{\mu}_{1},\cdots,\bar{\mu}_{l(\bar{\mu})})$ be a vector of elements in $G$ such that $$\phi^{R}(\bar{\mu})=\mu.$$
Here we view $\mu$ as a vector of elements in $\bZ_a$. Let $\gamma=(\gamma_{1}, \cdots, \gamma_{n})$ be a vector of elements in $G$ such that $\phi^{R}(\gamma)$ is a vector of \emph{nontrivial} elements in $\bZ_a$. Then we define $G_{g,\mu,\gamma}(\tau)_{a,\phi^R}$ as
\begin{eqnarray*}
&&\frac{\sqrt{-1}^{l(\mu)-|\mu|+2\sum_{i=1}^{l(\mu)}[\frac{\mu_i}{a}]}
\tau ^{l(\mu)-1}a^{l(\mu)-\sum_{i=1}^{l(\mu)}\delta_{0,\langle \frac{\mu_i}{a}\rangle}}}{\am|\Aut(\gamma)|}
\prod_{i=1}^{l(\mu)}\frac{\prod_{l=1}^{[\frac{\mu_i}{a}]}(\mu_i\tau+l)}{[\frac{\mu_i}{a}]!}\\
&&\cdot\int_{\Mbar_{g, \gamma-\bar{\mu}}(\cB G)}\frac{\left(-\frac{1}{a}(\tau+\frac{1}{a})\right)^{-\delta_K}
\Lambda_{g}^{\vee,R}(\frac{1}{a})\Lambda_{g}^{\vee,R^\vee}(-\tau-\frac{1}{a})
\Lambda_{g}^{\vee,1}(\tau )}{\prod_{i=1}^{l(\mu)}(1-\mu_{i}\bar{\psi}_{i})}
\end{eqnarray*}
where
$$\delta_K=\left\{\begin{array}{ll}1, &\textrm{if all monodromies around loops on the domain curve are contained in $K$},\\
0, &\textrm{otherwise}.\end{array} \right.$$
In the definition of $G_{g,\mu,\gamma}(\tau)_{a,\phi^R}$, we have some freedoms of choosing $\bar{\mu}$ and we set the following requirements:
\begin{enumerate}
\item If $\sum_{i=1}^{n}\phi^{R}(\gamma_i)-\sum_{j=1}^{l(\mu)}\mu_j=0$, then $\sum_{i=1}^{n}\gamma_i-\sum_{j=1}^{l(\mu)}\bar{\mu}_j\in K$. So we can choose $\bar{\mu}$ such that $\phi^{R}(\bar{\mu})=\mu$ still holds and $\sum_{i=1}^{n}\gamma_i-\sum_{j=1}^{l(\mu)}\bar{\mu}_j=0$.
\item If $\sum_{i=1}^{n}\phi^{R}(\gamma_i)-\sum_{j=1}^{l(\mu)}\mu_j\neq 0$, then both $\Mbar_{g, \gamma-\bar{\mu}}(\cB G)$ and $\Mbar_{g, \phi^{R}(\gamma)-\mu}(\cB\bZ_a)$ are empty and we choose $\bar{\mu}$ arbitrarily.
\end{enumerate}
We also define generating functions
\begin{eqnarray*}
G_{\mu,\gamma}(\lambda;\tau)_{a,\phi^R}&=&\sum_{g=0}^{\infty}\lambda^{2g-2+l(\mu)}G_{g,\mu,\gamma}(\tau)_{a,\phi^R}\\
G_\gamma(\lambda;\tau;p)_{a,\phi^R}&=&\sum_{\mu\neq \emptyset}G_{\mu,\gamma}(\lambda;\tau)_{a,\phi^R}p_\mu.\\
\end{eqnarray*}

The Mari\~{n}o-Vafa formula for $G$ is the following theorem.
\\
\begin{Theorem}[Mari\~{n}o-Vafa formula for $G$]\label{MVG}
\begin{eqnarray*}
G_\gamma(\lambda;\tau;p)_{a,\phi^R}=|K|R_{\phi^R(\gamma)}(|K|\lambda;\tau;\frac{p}{|K|})_{a},
\end{eqnarray*}
where $\frac{p}{|K|}$ denotes the formal variables $(\frac{p_1}{|K|},\frac{p_2}{|K|},\ldots,\frac{p_n}{|K|},\ldots)$.
\end{Theorem}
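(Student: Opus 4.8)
The plan is to reduce the formula for the arbitrary abelian group $G$ to the already-established case of $\bZ_a$ (Theorem \ref{MVza}) by exploiting the surjection $\phi^R : G\to\bZ_a$. First I would record the bookkeeping: after the substitutions $\lambda\mapsto |K|\lambda$, $p\mapsto p/|K|$ and after invoking Theorem \ref{MVza} to replace $R_{\phi^R(\gamma)}$ by the $\bZ_a$ Gromov--Witten generating function $G_{\phi^R(\gamma)}(\cdot)_a$, the claimed identity becomes equivalent, coefficient by coefficient in $\lambda^{2g-2+l(\mu)}p_\mu$, to the numerical comparison of Hurwitz--Hodge integrals
$$G_{g,\mu,\gamma}(\tau)_{a,\phi^R}=|K|^{2g-1}\,G_{g,\mu,\phi^R(\gamma)}(\tau)_a,$$
where the exponent arises as $|K|^{1}\cdot |K|^{2g-2+l(\mu)}\cdot |K|^{-l(\mu)}=|K|^{2g-1}$ from the three rescalings. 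Thus the whole theorem reduces to this per-$(g,\mu)$ identity.

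The geometric heart is the morphism
$$t:\Mbar_{g,\gamma-\bar\mu}(\cB G)\longrightarrow \Mbar_{g,\phi^R(\gamma)-\mu}(\cB\bZ_a)$$
induced by pushing a $G$-cover forward along $\phi^R$ to a $\bZ_a$-cover; on monodromy data it sends $\gamma-\bar\mu$ to $\phi^R(\gamma-\bar\mu)=\phi^R(\gamma)-\mu$. I would first check that the entire $\cB G$ integrand is the $t$-pullback of the $\cB\bZ_a$ integrand. Since $R=U\circ\phi^R$ is trivial on $K=\ker\phi^R$, the $R$-isotypic part of the cohomology of the $G$-cover coincides with the $U$-isotypic part of the cohomology of its $\bZ_a$-quotient; hence $E^R=t^*E^U$, $E^{R^\vee}=t^*E^{U^\vee}$, $E^{1}=t^*E^{1}$, and therefore each factor $\Lambda_g^{\vee,R}(1/a)$, $\Lambda_g^{\vee,R^\vee}(-\tau-1/a)$, $\Lambda_g^{\vee,1}(\tau)$ pulls back, with equal ranks because $\gamma_i$ and $\phi^R(\gamma_i)$ have the same age. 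The descendents agree, $\bar\psi_i=t^*\bar\psi_i$, by compatibility of the two maps to $\Mbar_{g,n}$, and the locally constant factor matches, $\delta_K=t^*\delta$, because a loop monodromy of the $G$-cover lies in $K$ exactly when its $\phi^R$-image is trivial.

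With the integrand identified, the projection formula reduces everything to $\deg(t)$: both moduli stacks are smooth and proper of the same dimension $3g-3+n+l(\mu)$ (being stacks of admissible covers), so $t$ is proper generically finite and $\int_{\Mbar(\cB G)}t^*\omega=\deg(t)\int_{\Mbar(\cB\bZ_a)}\omega$. Requirement (1), that $\sum_i\gamma_i-\sum_j\bar\mu_j=0$ in $G$, is precisely what makes every $\bZ_a$-cover in the target lift, so $t$ is surjective of constant degree. To compute it I would work over a generic point of $\Mbar_{g,n+l(\mu)}$: the lifts of a fixed $\bZ_a$-cover to a $G$-cover with the prescribed $G$-monodromies form a torsor under $H^1(C;K)\cong K^{2g}$, giving $|K|^{2g}$ lifts, while the generic band automorphism group has order $|G|$ over $\cB G$ against $a$ over $\cB\bZ_a$, a ratio $a/|G|=1/|K|$; equivalently the groupoid of $G$-torsors with fixed monodromy has mass $|G|^{2g-1}$ against $a^{2g-1}$. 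Either computation gives $\deg(t)=|K|^{2g-1}$.

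Finally I would assemble the pieces: the combinatorial prefactors in the two definitions of $G_{g,\mu,\gamma}(\tau)_{a,\phi^R}$ and $G_{g,\mu,\phi^R(\gamma)}(\tau)_a$ coincide (they depend only on $\mu$, $a$, $\tau$, together with automorphism factors matched by $\phi^R$), so the integrand identity together with $\deg(t)=|K|^{2g-1}$ yields exactly $G_{g,\mu,\gamma}(\tau)_{a,\phi^R}=|K|^{2g-1}G_{g,\mu,\phi^R(\gamma)}(\tau)_a$, which is the required per-$(g,\mu)$ statement. I expect the main obstacle to be the degree computation --- in particular pinning down the $-1$ in $2g-1$, i.e. correctly accounting for the stacky band automorphisms of orders $|G|$ versus $a$ rather than merely the naive count $|K|^{2g}$ of topological lifts.
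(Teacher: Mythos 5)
Your proposal is correct and follows essentially the same route as the paper: reduce the claim coefficient-by-coefficient to the per-genus identity $G_{g,\mu,\gamma}(\tau)_{a,\phi^R}=|K|^{2g-1}G_{g,\mu,\phi^R(\gamma)}(\tau)_a$, prove that via the morphism $\rho:\Mbar_{g,\gamma-\bar{\mu}}(\cB G)\to\Mbar_{g,\phi^R(\gamma)-\mu}(\cB\bZ_a)$ induced by $\phi^R$ (pull back the integrand, apply the projection formula with $\deg\rho=|K|^{2g-1}$), and then convert $G$ to $R$ using Theorem \ref{MVza}. The only difference is that the paper simply cites \cite{JPT} for the two key geometric inputs --- $\bE^R\cong\rho^*\bE^U$ and the degree formula $\deg\rho=|K|^{2g-1}$ --- which you instead rederive by counting lifts of monodromy representations and matching the stacky automorphism factors $|G|$ versus $a$.
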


\subsection{The local Gromov-Witten theory of orbi-curves}
Let $(X,p_1,\cdots,p_r,q_1,\cdots,q_s)$ be a fixed non-singular genus $g$ orbi-curve with stack points $p_1,\cdots,p_r$ of orders $a_1,\cdots,a_r$ and with ordinary points $q_1,\cdots,q_s$. Let $\alpha^1,\cdots,\alpha^s$ be partitions of $d>0$. Let $\gamma^1,\cdots,\gamma^r$ be vectors of nontrivial elements in $\bZ_{a_1},\cdots,\bZ_{a_r}$ respectively. When $a_1=\cdots=a_r=1$, the \emph{relative local invariant} $Z^b_d(g)_{\overrightarrow{\alpha}}$ of $X$ is defined in \cite{Bry-Pan1}. This local theory is called \emph{local Calabi-Yau theory} in \cite{Bry-Pan2} because the vector bundle over the target curve in the definition of $Z^b_d(g)_{\overrightarrow{\alpha}}$ is a (non-compact) Calabi-Yau threefold. In \cite{Bry-Pan2}, the local Gromov-Witten theory of ordinary curves is solved without imposing the Calabi-Yau condition on the target.

We will define the \emph{relative local invariant} $Z^{b,\gamma}_d(g)_{\overrightarrow{\alpha},\overrightarrow{a}}$ of $(X,p_1,\cdots,p_r,q_1,\cdots,q_s)$ similar to the one in \cite{Bry-Pan1}, where $\overrightarrow{a}=(a_1,\cdots,a_r)$ and $\gamma=\gamma^1+\cdots+\gamma^r$. Then to determine the relative local invariants of all $(X,p_1,\cdots,p_r,q_1,\cdots,q_s)$, we only need to determine $Z^{b,\gamma}_d(0)_{(\mu),(a)}$ because of the gluing law in \cite{Bry-Pan1}. Define the generating function
$$Z_d(g)(\lambda;x)_{\overrightarrow{\alpha},\overrightarrow{a}}=
\sum_{b,\gamma}Z^{b,\gamma}_d(g)_{\overrightarrow{\alpha},\overrightarrow{a}}\lambda^bx_\gamma.$$
We will use the Mari\~{n}o-Vafa formula for $\bZ_a$ to calculate $Z_d(0)(\lambda;x)_{(\mu),(a)}$. The result is the following theorem:
\begin{Theorem}
\begin{eqnarray*}
\lambda^{-\frac{d}{a}}Z_d(0)(\lambda;\lambda^{\frac{1}{a}-1}x_1,\cdots,\lambda^{\frac{a-1}{a}-1}x_{a-1})
_{(\mu),(a)}=\sqrt{-1}^{d-l(\mu)}(q^{\frac{1}{2}}q_1^{-\frac{1}{a}}\cdots q_{a-1}^{-\frac{a-1}{a}})^{|\mu|}\sum_{|\nu|=|\mu|}s_{\nu'}(-q_{\bullet})\frac{\chi_{\nu}(\mu)}{z_\mu},
\end{eqnarray*}
where the change of variables is given in Theorem \ref{correspondence}.
\end{Theorem}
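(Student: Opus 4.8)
The plan is to compute the left-hand side directly by equivariant localization on the orbi-curve and to recognize the output as the Hurwitz--Hodge integral that defines $G^\bullet_{\mu}(\lambda;0;x)_{a}$; the closed formula on the right then follows from the Mari\~no--Vafa formula for $\bZ_a$. The starting observation is that the claimed right-hand side is exactly $\sqrt{-1}^{\,d-l(\mu)}R^\bullet_{\mu}(\lambda;0;x)_{a}$, and by the initial condition $G^\bullet_{\mu}(\lambda;0;x)_{a}=R^\bullet_{\mu}(\lambda;0;x)_{a}$ recorded at the end of Section~\ref{introduction} it equals $\sqrt{-1}^{\,d-l(\mu)}G^\bullet_{\mu}(\lambda;0;x)_{a}$. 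Hence it suffices to prove the geometric identity
\begin{equation*}
\lambda^{-\frac{d}{a}}Z_d(0)(\lambda;\lambda^{\frac{1}{a}-1}x_1,\cdots,\lambda^{\frac{a-1}{a}-1}x_{a-1})_{(\mu),(a)}=\sqrt{-1}^{\,d-l(\mu)}G^\bullet_{\mu}(\lambda;0;x)_{a},
\end{equation*}
after which Theorem~\ref{MVza} and the equality $G^\bullet_{\mu}=R^\bullet_{\mu}$ at $\tau=0$ give the stated Schur-polynomial expression.

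First I would unwind the definition of $Z^{b,\gamma}_d(0)_{(\mu),(a)}$ following \cite{Bry-Pan1}: it is the integral, over the virtual class of the moduli space of (disconnected, as encoded by the $\bullet$) relative stable maps to the genus-$0$ orbi-curve $\bP^1_{a}$ relative to the ordinary point $\infty$ with ramification profile $\mu$ and prescribed monodromy $\gamma$ over the stack point $0$, of the equivariant Euler class of the obstruction bundle $-R\pi_* f^*(L_1\oplus L_2)$ associated to the local Calabi--Yau threefold. The Calabi--Yau balancing $\deg L_1+\deg L_2=2g-2$ is precisely what will force the framing parameter of $G$ to specialize to $\tau=0$. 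I would then apply virtual localization with respect to the $\bC^*$-action on $\bP^1_{a}$ fixing the two special points, lifted to $L_1\oplus L_2$ with opposite weights whose ratio plays the role of $\tau$.

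Next I would analyze the fixed loci. They are indexed by maps whose components over $0$ are contracted onto the stack point and whose components over $\infty$ are rigid covers determined by $\mu$, joined by orbifold tube covers along the edges. The contracted locus contributes an integral over $\Mbar_{g, \gamma-\mu}(\cB\bZ_{a})$; the twisted Hodge bundles coming from $L_1$, $L_2$ and from the deformation/tangent directions produce exactly the numerator $\Lambda_{g}^{\vee,U}(\tfrac1a)\Lambda_{g}^{\vee,U^\vee}(-\tau-\tfrac1a)\Lambda_{g}^{\vee,1}(\tau)$ (specializing to weights $\tfrac1a,-\tfrac1a,0$ at $\tau=0$); the node-smoothing classes at the attaching points give the denominator $\prod_{i=1}^{l(\mu)}(1-\mu_i\bar{\psi}_i)$; and the edge and automorphism factors of the orbifold covers assemble into the prefactor $\sqrt{-1}^{\,\cdots}a^{\,\cdots}\prod_i\frac{\prod_{l=1}^{[\mu_i/a]}(\mu_i\tau+l)}{[\mu_i/a]!}\big/(\am|\Aut(\gamma)|)$. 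Comparing the fixed-point sum term by term with the definition of $G_{g,\mu,\gamma}(0)_{a}$, the rescaling $x_i\mapsto\lambda^{\frac{i}{a}-1}x_i$ together with the overall $\lambda^{-d/a}$ absorbs the age-dependent powers of the equivariant parameter carried by each monodromy sector, while the power $\lambda^{2g-2+l(\mu)}$ emerges from the virtual dimension count, matching $G_{\mu}(\lambda;\tau;x)_{a}$.

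The main obstacle will be this bookkeeping: checking that the orbifold edge weights and the ages of the monodromy sectors combine to yield exactly the rescaling by $\lambda^{\frac{i}{a}-1}$ and the single global sign $\sqrt{-1}^{\,d-l(\mu)}$, with no residual $\tau$-dependence surviving once the Calabi--Yau normalization $\tau=0$ is imposed. Once the displayed identity is established, summing over the genus and over $\gamma$ passes to the disconnected generating functions, and Theorem~\ref{MVza} together with $G^\bullet_{\mu}(\lambda;0;x)_{a}=R^\bullet_{\mu}(\lambda;0;x)_{a}$ yields the closed expression $\sqrt{-1}^{\,d-l(\mu)}(q^{\frac12}q_1^{-\frac1a}\cdots q_{a-1}^{-\frac{a-1}{a}})^{|\mu|}\sum_{|\nu|=|\mu|}s_{\nu'}(-q_{\bullet})\frac{\chi_{\nu}(\mu)}{z_\mu}$ claimed in the theorem.
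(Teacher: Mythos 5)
Your proposal is correct and follows essentially the same route as the paper: the paper likewise identifies the obstruction bundle $I(X,\overrightarrow{\alpha})$ for the genus-zero orbi-curve with $V=V_D\oplus V_{D_d}$, so that $Z^{b,\gamma}_d(0)_{(\mu),(a)}=K^\bullet_{\chi,\mu,\gamma}$ with $b=-\chi+n+l(\mu)+\frac{d}{a}-\sum_{i=1}^{n}\frac{\gamma_i}{a}$, absorbs the rescaling $\lambda^{-d/a}$, $x_i\mapsto\lambda^{\frac{i}{a}-1}x_i$ into the regrading by $-\chi+l(\mu)$, and then concludes via Theorem \ref{key} at $\tau=0$ together with the formula for $G^\bullet_{\mu}(\lambda;0;x)_{a}$ from Theorem \ref{correspondence} --- exactly the localization computation and initial condition you describe. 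The only cosmetic difference is that the paper cites its already-proven Theorem \ref{key} rather than re-deriving the fixed-locus analysis, and it treats $\tau$ as a free equivariant lift (with $K^\bullet_{\chi,\mu,\gamma}$ independent of $\tau$, evaluated at $\tau=0$) rather than as a parameter forced to zero by the Calabi--Yau condition.
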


\subsection{Acknowledgments}

I wish to express my deepest thanks to my advisor Chiu-Chu Melissa Liu. When I started this work, she helped me to find many important references, from which I learned the orbifold Gromov-Witten theory and the Mari\~{n}o-Vafa formula. I have learned a lot of things from discussions with her. Her papers \cite{Liu},  \cite{Liu-Liu-Zhou1}, \cite{Liu-Liu-Zhou2} guided me through the whole process of this work. She also helped me to learn some skills of LaTeX. This paper would not have been possible without her. I also wish to thank Hsian-Hua Tseng and Jim Bryan for their helpful communications which are important to the study of the local Gromov-Witten theory of orbi-curves.

\section{Initial value and orbifold Gromov-Witten/Donaldson-Thomas correspondence}\label{initial}
The original GW/DT correspondence is conjectured in \cite{MNOP1,MNOP2}, which states that the GW theory and the DT theory of a smooth 3-fold are equivalent after a change of variables. In \cite{MOOP}, this conjecture is proven for smooth toric 3-folds . This result can be viewed as the  equivalence of the GW theory and the DT theory of the non-orbifold topological vertex. In \cite{Bry-Cad-You}, the DT theory of the orbifold topological vertex is established. We will prove the orbifold GW/DT correspondence between our GW vertex $G(\lambda;0;p;x)_a$ and the one-leg DT vertex $V_{\nu\emptyset\emptyset}^a(q,q_1,\cdots,q_{a-1})$ which appears in Example 4.2 in \cite{Bry-Cad-You}. When $a=2$, this correspondence is proven in \cite{Ros}.

In \cite{Bry-Cad-You}, the DT vertex $V_{\nu\emptyset\emptyset}^a(q,q_1,\cdots,q_{a-1})$ is expressed as 
\begin{eqnarray*}
&&V_{\nu\emptyset\emptyset}^a(q,q_1,\cdots,q_{a-1})\\
&=&V_{\emptyset\emptyset\emptyset}^a(q,q_1,\cdots,q_{a-1})q^{-A_\nu(0,a)}
q_1^{A_\nu(0,a)-A_\nu(1,a)}\cdots q_{a-1}^{A_\nu(0,a)-A_\nu(a-1,a)}s_{\nu'}(q_{\bullet}).
\end{eqnarray*}
where $A_\nu(k,n)=\sum_{(i,j)\in\nu}[\frac{i+k}{n}]$, $s_\nu'$ is the Schur polynomial, $\nu'$ is the conjugate of $\nu$ and $q_{\bullet}=(Q,Qq_{a-1},\cdots, Qq_1\cdots q_{a-1}), Q=(1,q,q^2,q^3,\cdots)$. Let $V_{\nu\emptyset\emptyset}^{'a}(q,q_1,\cdots,q_{a-1})=
\frac{V_{\nu\emptyset\emptyset}^a(q,q_1,\cdots,q_{a-1})}{V_{\emptyset\emptyset\emptyset}^a(q,q_1,\cdots,q_{a-1})}$ be the corresponding reduced DT vertex. The following theorem gives the orbifold GW/DT correspondence between $G^\bullet_{\mu}(\lambda;0;x)_{a}$ and $V_{\nu\emptyset\emptyset}^{'a}(q,q_1,\cdots,q_{a-1})$.
\begin{theorem}\label{correspondence}
Under the change of variables $q=-e^{\sqrt{-1}\lambda},q_l=\xi_a^{-1}e^{-\sum_{i=1}^{a-1}\frac{\omega_a^{-2il}}{a}(\omega_a^i-\omega_a^{-i})x_i},l=1,
\cdots,a-1$ we have
\begin{eqnarray*}
G^\bullet_{\mu}(\lambda;0;x)_{a}&=&\sum_{|\nu|=|\mu|}(-1)^{A_\nu(0,a)}q^{\frac{|\mu|}{2}+A_\nu(0,a)}q_1^{-\frac{d}{a}
+A_\nu(1,a)-A_\nu(0,a)}\cdots q_{a-1}^{-\frac{d(a-1)}{a}+A_\nu(a-1,a)-A_\nu(0,a)}\\
&&V_{\nu\emptyset\emptyset}^{'a}(-q,q_1,\cdots,q_{a-1})
\frac{\chi_{\nu}(\mu)}{z_\mu},
\end{eqnarray*}
where $\xi_a=e^{\frac{2\pi i}{a}},\omega_a=e^{\frac{\pi i}{a}}$ and $z_\mu=|\Aut(\mu)|\mu_1\cdots\mu_{l(\mu)}$.
\end{theorem}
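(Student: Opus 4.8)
The plan is to reduce the statement to the \emph{initial condition} $G^\bullet_\mu(\lambda;0;x)_a = R^\bullet_\mu(\lambda;0;x)_a$ recorded just after Theorem~\ref{key}, and to carry out that reduction by a direct algebraic manipulation of the right-hand side using the explicit form of the reduced DT vertex stated at the start of this section.

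First I would substitute the Bryan--Cadman--Young expression for $V_{\nu\emptyset\emptyset}^{'a}$. Setting $q\mapsto -q$ in its first slot, and noting that $q_\bullet$ becomes $-q_\bullet$ by the very definitions of $Q$ and $-Q$, one gets
\[
V_{\nu\emptyset\emptyset}^{'a}(-q,q_1,\dots,q_{a-1})
= (-q)^{-A_\nu(0,a)}\prod_{l=1}^{a-1} q_l^{\,A_\nu(0,a)-A_\nu(l,a)}\, s_{\nu'}(-q_\bullet).
\]
Multiplying by the prefactor appearing in the theorem, the $A_\nu$-dependent exponents are engineered to cancel: the sign $(-1)^{A_\nu(0,a)}$ meets $(-q)^{-A_\nu(0,a)}$ to leave $q^{-A_\nu(0,a)}$, which kills the $q^{A_\nu(0,a)}$ in the prefactor, while each $q_l$-exponent telescopes to $-\tfrac{dl}{a}$. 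What survives is exactly
\[
(q^{1/2}q_1^{-1/a}\cdots q_{a-1}^{-(a-1)/a})^{|\mu|}\sum_{|\nu|=|\mu|} s_{\nu'}(-q_\bullet)\,\frac{\chi_\nu(\mu)}{z_\mu} = R^\bullet_\mu(\lambda;0;x)_a,
\]
the closed form recorded in the introduction (using $d=|\mu|=|\nu|$). Hence the theorem is equivalent to the initial condition $G^\bullet_\mu(\lambda;0;x)_a = R^\bullet_\mu(\lambda;0;x)_a$.

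Second, I would establish that initial condition. By the $\tau=0$ specialization of Theorem~\ref{key}, which uses \eqref{eqn:Init}, one has $G^\bullet_\mu(\lambda;0;x)_a = K^\bullet_\mu(\lambda;x)$, reducing matters to the relative Gromov--Witten generating series of $(\bP^1_a,\infty)$ introduced in Section~3. Equivalently, working directly with $G^\bullet_\mu(\lambda;0;x)_a$: in the connected invariant the factor $\tau^{l(\mu)-1}$ forces $G_{g,\mu,\gamma}(0)_a$ to vanish unless $l(\mu)=1$, so only one-part partitions contribute at the connected level and $G^\bullet$ is the corresponding exponential. One then evaluates the resulting one-part orbifold Hodge integral over $\Mbar_{g,\gamma-(d)}(\cB\bZ_a)$, with the specialized classes $\Lambda^{\vee,U}_g(1/a)$, $\Lambda^{\vee,U^\vee}_g(-1/a)$, the top Chern class $\Lambda^{\vee,1}_g(0)$, and the factor $1/(1-d\bar\psi)$, by the orbifold analogue of the one-partition Hodge-integral computation, and matches it term by term to the Schur expansion in $R^\bullet_\mu(\lambda;0;x)_a$.

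The algebraic reduction in the first step is routine but sign-sensitive, and I would verify the $A_\nu$ cancellations slot by slot. The genuine obstacle is the initial condition itself: evaluating the one-part orbifold Hodge integrals at $\tau=0$ and identifying them with the quantum-dimension/Schur data on the DT side. If that initial condition is taken as given, having been stated just before this section, the theorem follows immediately from the first step alone.
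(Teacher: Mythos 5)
Your first step is sound: the cancellation showing that the right-hand side of Theorem \ref{correspondence} collapses to $(q^{\frac12}q_1^{-\frac1a}\cdots q_{a-1}^{-\frac{a-1}{a}})^{|\mu|}\sum_{|\nu|=|\mu|}s_{\nu'}(-q_{\bullet})\frac{\chi_\nu(\mu)}{z_\mu}=R^\bullet_\mu(\lambda;0;x)_a$ is exactly the manipulation the paper performs (in the opposite direction) in the final lines of its proof, and your sign bookkeeping with $(-1)^{A_\nu(0,a)}(-q)^{-A_\nu(0,a)}$ and the telescoping $q_l$-exponents is correct. So the theorem is indeed equivalent to the initial condition $G^\bullet_\mu(\lambda;0;x)_a=R^\bullet_\mu(\lambda;0;x)_a$.

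The gap is in your second step, and it is a gap of substance, not presentation. In the paper the logical order is the reverse of what you assume: the initial condition is not an independently established fact that may be "taken as given" — it is stated in the introduction as an ingredient still to be proven, and Section 2 proves it precisely \emph{by} proving Theorem \ref{correspondence} through direct computation (the paper says explicitly that the initial condition "follows from Theorem \ref{correspondence}"). So your fallback route is circular. Your other route, specializing Theorem \ref{key} at $\tau=0$, only yields $G^\bullet_\mu(\lambda;0;x)_a=K^\bullet_\mu(\lambda;x)$, which trades one unknown for another: $K^\bullet_\mu$ is never computed independently in the paper, only expressed via localization in terms of $G^\bullet$ itself. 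What remains — and what constitutes essentially the entire proof — is the evaluation you only gesture at: reduce to $l(\mu)=1$ via the $\tau^{l(\mu)-1}$ factor; evaluate $G_{g,(d),\gamma}(0)_a$ using the orbifold Mumford relation of Bryan--Graber--Pandharipande, the fact that $\Mbar_{g,\gamma-(d)}(\cB\bZ_a)\to\Mbar_{g,n+1}$ has degree $a^{2g-1}$, and the string equation; sum over $g$ with the Faber--Pandharipande identity $\sum_g\lambda^{2g}\int_{\Mbar_{g,1}}\lambda_g\psi^{2g-2}=\frac{\lambda/2}{\sin(\lambda/2)}$; sum over $d$ and $\gamma$ using roots of unity to impose the parity constraint $d\equiv\sum_i\gamma_i \pmod a$; and then verify that the \emph{specific} change of variables in the statement (through the quantities $u_l$ and the relations $u_{l-1}/u_l=\xi_a q_l$, $u_0\cdots u_{a-1}=1$) converts the resulting product into a sum of Schur functions via Macdonald's identity. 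Your proposal never engages with where the change of variables comes from or why it works, yet that verification is the crux; labeling the rest "the orbifold analogue of the one-partition computation" leaves the actual content of the theorem unproved.
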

\begin{proof}
If $\tau=0$, then $G_{g,\mu,\gamma}(0)_{a}=0$ if $l(\mu)>1$. If $l(\mu)=1$, then the  moduli space $\Mbar_{g, \gamma-(d)}(\cB\bZ_{a})$ is nonempty if and only if the parity condition
$$ d=\sum_{i=1}^{n}\gamma_{i}\textrm{ (mod $a$)}$$
holds. So when $l(\mu)=1$ and $ d=\sum_{i=1}^{n}\gamma_{i}\textrm{ (mod $a$)}$, we have
\begin{eqnarray*}
G_{g,(d),\gamma}(0)_{a}&=&\sqrt{-1}^{1-d+2[\frac{d}{a}]}a^{1-\delta_{0,\langle \frac{d}{a}\rangle}}\\
&&\frac{1}{|\Aut(\gamma)|}\int_{\Mbar_{g, \gamma-(d)}(\cB\bZ_{a})}\frac{\left(-\frac{1}{a^2}\right)^{-\delta}\Lambda_{g}^{\vee,U}(\frac{1}{a})\Lambda_{g}^{\vee,U^\vee}(-\frac{1}{a})
\Lambda_{g}^{\vee,1}(0)}{1-d\bar{\psi}}\\
&=&\sqrt{-1}^{1-d+2[\frac{d}{a}]}a^{1-\delta_{0,\langle \frac{d}{a}\rangle}}\\
&&\frac{1}{|\Aut(\gamma)|}\int_{\Mbar_{g, \gamma-(d)}(\cB\bZ_{a})}\frac{\frac{(-1)^{g-1+n-\sum_{i=1}^{n}\frac{\gamma_{i}}{a}+\langle \frac{d}{a}\rangle}}{a^{2g-1+n-\delta_{0,\langle \frac{d}{a}\rangle}}}(-1)^{g}\lambda_g}{1-d\bar{\psi}}\\
&=&\frac{(-1)^{n-1}\sqrt{-1}^{1-d+2(\frac{d}{a}-\sum_{i=1}^{n}\frac{\gamma_{i}}{a})}}{a^{2g-2+n}}\\
&&\frac{1}{|\Aut(\gamma)|}\int_{\Mbar_{g, \gamma-(d)}(\cB\bZ_{a})}\frac{\lambda_g}{1-d\bar{\psi}}
\end{eqnarray*}
\begin{eqnarray*}
&=&\frac{(-1)^{n-1}\sqrt{-1}^{1-d+2(\frac{d}{a}-\sum_{i=1}^{n}\frac{\gamma_{i}}{a})}}{a^{-1+n}}d^{2g-2+n}\\
&&\frac{1}{|\Aut(\gamma)|}\int_{\Mbar_{g, n+1}}\lambda_g\psi^{2g-2+n}\\
&=&\frac{(-1)^{n-1}\sqrt{-1}^{1-d+2(\frac{d}{a}-\sum_{i=1}^{n}\frac{\gamma_{i}}{a})}}{a^{-1+n}}d^{2g-2+n}\\
&&\frac{1}{|\Aut(\gamma)|}\int_{\Mbar_{g, 1}}\lambda_g\psi^{2g-2},
\end{eqnarray*}
where the second equality holds by Mumford's relation \cite{Bry-Gra-Pan}, the fourth equality holds by the fact that the degree of the map $\Mbar_{g, \gamma-(d)}(\cB\bZ_{a})\to \Mbar_{g, n+1}$ is $a^{2g-1}$ \cite{JPT}, and the last equality holds by the string equation.

So we have
\begin{eqnarray*}
G_{(d),\gamma}(\lambda;0)_{a}&=&\sum_{g=0}^{\infty}\lambda^{2g-1}G_{g,(d),\gamma}(0)_{a}\\
&=&\frac{(-1)^{n-1}\sqrt{-1}^{1-d+2(\frac{d}{a}-\sum_{i=1}^{n}\frac{\gamma_{i}}{a})}d^{n-1}}{2a^{-1+n}\sin \frac{d\lambda}{2}|\Aut(\gamma)|},
\end{eqnarray*}
where the second equality holds by the fact (see \cite{Fab-Pan})
$$\sum_{g=0}^{\infty}\lambda^{2g}\int_{\Mbar_{g, 1}}\lambda_g\psi^{2g-2}=\frac{\lambda/2}{\sin(\lambda/2)}.$$
So we have
\begin{eqnarray*}
G(\lambda;0;p;x)_{a}&=&\sum_{d\geq 1,d=\sum_{i=1}^{n}\gamma_{i}(\textrm{mod}a)}\frac{(-1)^{n-1}\sqrt{-1}^{1-d+2(\frac{d}{a}-\sum_{i=1}^{n}
\frac{\gamma_{i}}{a})}d^{n-1}}{2a^{-1+n}\sin \frac{d\lambda}{2}|\Aut(\gamma)|}p_d x_\gamma\\
&=&-\sum_{d\geq 1}\sum_{k_1,\cdots,k_{a-1}\geq 0}\sum_{l=0}^{a-1}\xi_{a}^{l(d-\sum_{j=1}^{a-1}jk_j)}\frac{\sqrt{-1}^{1-d+\frac{2d}{a}}p_d}{2d\sin \frac{d\lambda}{2}}\prod_{j=1}^{a-1}\frac{(-\frac{d}{a}\omega_{a}^{-j}x_j)^{k_j}}{k_j!}\\
&=&-\sum_{d\geq 1}\sum_{k_1,\cdots,k_{a-1}\geq 0}\sum_{l=0}^{a-1}\xi_{a}^{ld}\frac{\sqrt{-1}^{1-d+\frac{2d}{a}}p_d}{2d\sin \frac{d\lambda}{2}}\prod_{j=1}^{a-1}\frac{(-\frac{d}{a}\omega_{a}^{-j}\xi_a^{-jl}x_j)^{k_j}}{k_j!}.
\end{eqnarray*}
Let $u_l=\exp(\sum_{j=1}^{a-1}-\frac{1}{a}\omega_{a}^{-j}\xi_a^{-jl}x_j),l=0,\cdots,a-1$ and $p_d=\sum_{i=1}^{\infty}y_i^d$. Then we have
\begin{eqnarray*}
G(\lambda;0;p;x)_{a}&=&-\sum_{d\geq 1}\sum_{l=0}^{a-1}\frac{p_d}{d}\frac{(\sqrt{-1}^{-\frac{a-2}{a}}\xi_a^lu_l(-q)^{\frac{1}{2}})^d}{1-(-q)^{d}}\\
&=& -\sum_{d\geq 1}\sum_{l=0}^{a-1}\sum_{i,j\geq 1}\frac{1}{d}(\sqrt{-1}^{-\frac{a-2}{a}}\xi_a^lu_l(-q)^{\frac{1}{2}}y_i)^d(-q)^{d(j-1)}\\
&=&\log(\prod_{l=0}^{a-1}\prod_{i,j\geq 1}(1+\sqrt{-1}^{-\frac{a-2}{a}}\xi_a^lu_l(-q)^{\frac{1}{2}}y_i(-q)^jq^{-1})).
\end{eqnarray*}
Note that $\frac{u_{l-1}}{u_l}=\xi_aq_l$ for $l=1,\cdots,a-1$. So we have $u_l\xi_a^l=\frac{u_0}{q_1\cdots q_l},l=1,\cdots,a-1$. Also note that $u_0\cdots u_{a-1}=1$. So $u_l\xi_a^l=\frac{1}{u_1\cdots u_{a-1}q_1\cdots q_l}$. By taking the product of this identity for $l=1,\cdots,a-1$, we obtain
$$u_1\cdots u_{a-1}=(-1)^{\frac{a-1}{a}}\frac{1}{q_1^{\frac{a-1}{a}}q_2^{\frac{a-2}{a}}\cdots q_{a-1}^{\frac{1}{a}}}.$$
Thus we have
$$u_l\xi_a^l=(-1)^{\frac{a-1}{a}}q_1^{-\frac{1}{a}}\cdots q_{a-1}^{-\frac{a-1}{a}}q_{l+1}\cdots q_{a-1}.$$
Therefore
\begin{eqnarray*}
G(\lambda;0;p;x)_{a}&=&\log(\prod_{l=0}^{a-1}\prod_{i,j\geq 1}(1+q^{\frac{1}{2}}q_1^{-\frac{1}{a}}\cdots q_{a-1}^{-\frac{a-1}{a}}q_{l+1}\cdots q_{a-1}(-q)^{j-1}y_i))\\
&=&\log(\sum_{d\geq 1}(q^{\frac{1}{2}}q_1^{-\frac{1}{a}}\cdots q_{a-1}^{-\frac{a-1}{a}})^d\sum_{|\nu|=d}s_{\nu'}(-q_{\bullet})s_{\nu}(y))\\
&=&\log(\sum_{d\geq 1}(q^{\frac{1}{2}}q_1^{-\frac{1}{a}}\cdots q_{a-1}^{-\frac{a-1}{a}})^d\sum_{|\mu|=|\nu|=d}s_{\nu'}(-q_{\bullet})\frac{\chi_{\nu}(\mu)}{z_\mu}p_\mu(y)),
\end{eqnarray*}
where the second identity can be found in \cite{Mac},
$$s_{\nu}(y)=\sum_{|\mu|=|\nu|}\frac{\chi_{\nu}(\mu)}{z_\mu}p_\mu(y)$$
is the Schur polynomial, $-q_{\bullet}=(-Q,-Qq_{a-1},\cdots, -Qq_1\cdots q_{a-1}), -Q=(1,-q,(-q)^2,(-q)^3,\cdots)$, and $z_\mu=|\Aut(\mu)|\mu_1\cdots\mu_{l(\mu)}$.
Recall that $G^\bullet(\lambda;0;p;x)_{a}=\exp(G(\lambda;0;p;x)_{a})
=1+\sum_{\mu\neq\emptyset}G^\bullet_{\mu}(\lambda;0;x)_{a}p_\mu$.
So
\begin{eqnarray*}
G^\bullet_{\mu}(\lambda;0;x)_{a}&=&(q^{\frac{1}{2}}q_1^{-\frac{1}{a}}\cdots q_{a-1}^{-\frac{a-1}{a}})^{|\mu|}\sum_{|\nu|=|\mu|}s_{\nu'}(-q_{\bullet})\frac{\chi_{\nu}(\mu)}{z_\mu}\\
&=&\sum_{|\nu|=|\mu|}(-1)^{A_\nu(0,a)}q^{\frac{|\mu|}{2}+A_\nu(0,a)}q_1^{-\frac{d}{a}
+A_\nu(1,a)-A_\nu(0,a)}\cdots q_{a-1}^{-\frac{d(a-1)}{a}+A_\nu(a-1,a)-A_\nu(0,a)}\\
&&V_{\nu\emptyset\emptyset}^{'a}(-q,q_1,\cdots,q_{a-1})
\frac{\chi_{\nu}(\mu)}{z_\mu}.
\end{eqnarray*}
\end{proof}
Recall that
\begin{eqnarray*}
R^\bullet_{\mu}(\lambda;0;x)_{a}&=&(q^{\frac{1}{2}}q_1^{-\frac{1}{a}}\cdots q_{a-1}^{-\frac{a-1}{a}})^{|\mu|}\sum_{|\nu|=|\mu|}s_{\nu'}(-q_{\bullet})\frac{\chi_{\nu}(\mu)}{z_\mu}.
\end{eqnarray*}
Therefore, the initial condition $G^\bullet_{\mu}(\lambda;0;x)_{a}=R^\bullet_{\mu}(\lambda;0;x)_{a}$ follows from Theorem \ref{correspondence} and the expression of $V_{\nu\emptyset\emptyset}^{'a}$.
\section{Moduli spaces of relative stable morphisms} \label{moduli}
\subsection{Moduli spaces}
Fix an integer $a\geq 1$. Let $\bP^1_{a}$ be the projective line $\bP^1$ with root construction \cite{Cad} of order $a$ at 0. For an integer $m>0$, let
$$
\bP^1_{a}[m]=\bP^1_{a}\cup\bP^1_{(1)}\cup\cdots\cup\bP^1_{(m)}
$$
be the union of $\bP^1_{a}$ and a chain of $m$ copies $\bP^1$, where $\bP^1_{a}$ is glued to $\bP^1_{(1)}$ at  $p_1^{(0)}$ and $\bP^1_{(l)}$ is
glued to $\bP^1_{(l+1)}$ at $p_1^{(l)}$ for $1\leq l \leq m-1$. The distinguished point on $\bP^1_{(m)}$ is $p_1^{(m)}$. We call the irreducible component $\bP^1_{a}$ the root component and the other irreducible components the bubble components. Denote by $\pi[m]: \bP^1_{a}[m] \to \bP^1_{a}$ the map which
is identity on the root component and contracts all the bubble components
to $p_1^{(0)}$. Let
$$
\bP^1(m)=\bP^1_{(1)}\cup\cdots\cup\bP^1_{(m)}
$$
denote the union of bubble components of $\bP^1_{a}[m]$. For convenience, we set $\bP^1_{a}[0]=\bP^1_{(0)}=\bP^1_{a}$.

Let $\gamma=(\gamma_{1}, \cdots, \gamma_{n})$ be the vector of integers
$$1\leq \gamma_{i}\leq a-1$$
defining nontrivial elements $\gamma_{i}\in \bZ_{a}$. Let $\mu=(\mu_{1}\geq\cdots\geq\mu_{l(\mu)}>0)$ be a partition of $d>0$.
Let $\Mbar_{g, \gamma}(\bP^1_{a}, \mu)$ be the moduli space of relative maps to $(\bP^1_{a}, \infty)$. Then a point in $\Mbar_{g, \gamma}(\bP^1_{a}, \mu)$ is of the form
$$
[f: (C, x_1,\cdots , x_n, y_1, \cdots , y_{l(\mu)})\to (\bP^1_a,p_1^{(m)})]
$$
such that
$$
f^{-1}(p_1^{(m)})=\sum_{i=1}^{l(\mu)}\mu_iy_i
$$
as Cartier divisors.
In order for the moduli space $\Mbar_{g, \gamma}(\bP^1_{a}, \mu)$ to be non-empty, we must have the parity condition
$$ d=\sum_{i=1}^{n}\gamma_{i}\textrm{ (mod $a$)}.$$
We will also consider the disconnected version  $\Mbar^\bullet_{\chi, \gamma}(\bP^1_{a}, \mu)$, where the domain curve $C$ is allowed to be disconnected with $2(h^0(\cO_{c})-h^0(\cO_{c}))=\chi$.

Similarly, if we specify ramification types $\nu,\mu$ over $0,\infty\in\bP^1$, we can define the corresponding moduli spaces $\Mbar_{g, 0}(\bP^1,\nu, \mu)$ and $\Mbar^\bullet_{\chi}(\bP^1,\nu, \mu)$ of relative stable maps.

\subsection{Torus action}
Consider the $\bC^*$-action
$$t \cdot [z^0:z^1] = [tz^0: z^1]$$
on $\bP^1$. This action lifts canonically on $\bP^1_a$. These induce actions on $\bP^1[m]$ and on $\bP^1_a[m]$ with trivial actions on the bubble components. These in turn induce actions on $\Mbar_{g, \gamma}(\bP^1_{a}, \mu), \Mbar^\bullet_{\chi, \gamma}(\bP^1_{a}, \mu), \Mbar_{g, 0}(\bP^1,\nu, \mu)$, and $\Mbar^\bullet_{\chi}(\bP^1,\nu, \mu)$. Define the quotient space $\MP//\bC^*$ to be
$$\MP//\bC^*=(\MP\setminus \MP^{\bC^*})/\bC^*.$$

\subsection{The branch morphism and double Hurwitz numbers}
Similar to the case of $\Mbar_{g, \gamma}(\bP^1_{a}, \mu)$, a map $[f]\in \Mbar^\bullet_{\chi}(\bP^1,\nu, \mu)$ has target of the form $\bP^1[m_{0},m_1]$, where $\bP^1[m_{0},m_1]$ is obtained by attaching $\bP^1(m_0)$ and $\bP^1(m_1)$ to $\bP^1$ at 0 and $\infty$ respectively. The distinguished points on $\bP^1[m_{0},m_1]$ are $q^0_{m_0}$ and $q^1_{m_1}$. Let $\pi[m_{0},m_1]:\bP^1 [m_{0},m_1]\to \bP^1$ be the contraction to the root component. Let $r=-\chi+l(\nu)+l(\mu)$ be the virtual dimension of $\Mbar^\bullet_{\chi}(\bP^1,\nu, \mu)$. Then there is a branch morphism
$$ \textrm{Br}:\Mbar^\bullet_{\chi}(\bP^1,\nu, \mu)\to \textrm{Sym}^r\bP^1$$
sending $[f:C\to \bP^1[m_0,m_1]]$ to
$$\textrm{div} (\tilde{f})-(d-l(\nu))0-(d-l(\mu))\infty,$$
where div$(\tilde{f})$ is the branch divisor of $\tilde{f}=\pi [m_{0},m_1]\circ f:C\to \bP^1$.

Recall that the disconnected double Hurwitz number is defined by
$$
\xm{H} =\frac{1}{\amm}\int_{[\MP]^{\vir} }\Br^*( H^{r}).
$$
In \cite{Liu-Liu-Zhou2}, the following proposition is proved.
\\
\begin{proposition}
$$\xm{H}=\frac{r!}{\amm}\int_{[\MP//\bC^*]^{\vir} }(\psi^0)^{r-1}$$
where $\psi^0$ is the target $\psi$ class, the first Chern class of the line bundle $\bL_0$ over $\MP$ whose fiber at
$$[f:C\to \bP^1[m_{0},m_1]]$$
is the cotangent line $T^*_{q^0_{m_0}}\bP^1[m_{0},m_1]$.
\end{proposition}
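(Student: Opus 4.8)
The plan is to prove the equivalent numerical identity
\[
\int_{[\MP]^{\vir}}\Br^*(H^r)=r!\int_{[\MP//\bC^*]^{\vir}}(\psi^0)^{r-1},
\]
since the automorphism factors $\frac{1}{\amm}$ appear identically on both sides and $r=-\chi+l(\nu)+l(\mu)$ is the virtual dimension. First I would unwind the left-hand side. As $\Br$ lands in $\mathrm{Sym}^r\bP^1\cong\bP^r$ and $H$ is the hyperplane class, $H^r$ is the class of a point, which I represent by a generic configuration $B=\{b_1,\dots,b_r\}$ of $r$ distinct points of $\bC^*=\bP^1\setminus\{0,\infty\}$. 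Hence $\int_{[\MP]^{\vir}}\Br^*(H^r)$ is the virtual degree of $\Br$ over $B$, that is, the virtual count of (possibly disconnected) relative stable maps whose $r$ free branch points sit at the prescribed generic positions; this is precisely $\amm\cdot\xm{H}$.

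Next I would exploit the $\bC^*$-action. The branch morphism is $\bC^*$-equivariant for the induced action on $\mathrm{Sym}^r\bP^1$, whose fixed points are the configurations $k\cdot 0+(r-k)\cdot\infty$. Over the open locus of configurations of distinct points in $\bC^*$ the action is free, so $\Br$ descends to a rubber branch morphism $\overline{\Br}\colon \MP//\bC^*\to Q$, where $Q:=\mathrm{Sym}^r\bP^1//\bC^*$, a map between stacks of the same virtual dimension $r-1$. Because a generic $\bC^*$-orbit of configurations meets the fibre $\Br^{-1}(B)$ in exactly one point (the stabilizer of $B$ being trivial), the quotient identifies $\Br^{-1}(B)$ with $\overline{\Br}^{\,-1}([B])$ as virtual $0$-cycles; thus $\deg\Br=\deg\overline{\Br}$ and
\[
\int_{[\MP]^{\vir}}\Br^*(H^r)=\int_{[\MP//\bC^*]^{\vir}}\overline{\Br}^{\,*}([\mathrm{pt}_Q]).
\]

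The heart of the argument is then the identity $\overline{\Br}^{\,*}([\mathrm{pt}_Q])=r!\,(\psi^0)^{r-1}$ on $\MP//\bC^*$. Here $Q$ is the $(r-1)$-dimensional (weighted) projective quotient of $\bP^r$ by the weight-$(0,1,\dots,r)$ torus action, and I would compute the point class by degenerating, one at a time, the $r$ branch points onto the relative divisor at $0$. Each such collision forces the target to sprout a bubble over $0$, and the corresponding divisor in the rubber space is cut out by the target cotangent line, producing a factor of $\psi^0$; carrying out $r-1$ successive degenerations expresses $[\mathrm{pt}_Q]$ as $(\psi^0)^{r-1}$ up to a combinatorial constant. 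The constant is $r!$: it records the number of orderings of the $r$ indistinguishable branch points as they are peeled off toward $0$, exactly as in the $r=2$ check where $\mathrm{Sym}^2\bP^1//\bC^*\cong\bP^1$ and $\overline{\Br}^{\,*}[\mathrm{pt}]=2\,\psi^0$.

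The main obstacle is making this last step rigorous at the virtual level. One must (i) justify that the branch morphism pulls back the point class of $Q$ to a genuine product of target $\psi^0$-classes rather than merely agreeing generically, which requires the excess-intersection and boundary analysis along the loci where branch points collide with $0$ or $\infty$ and where the domain degenerates; and (ii) control the strictly semistable configurations (all points limiting to $0$ or to $\infty$) that sit on the boundary of the GIT quotient $Q$, checking that they contribute nothing to the top intersection. Both are handled using the functoriality of the virtual class under $\Br$ together with the standard comparison of the perfect obstruction theories on $\MP$ and on its rubber quotient $\MP//\bC^*$, so that the degeneration bookkeeping can be carried out on virtual fundamental classes. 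Assembling these yields the asserted factor $r!$ and completes the proof.
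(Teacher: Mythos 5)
You should know first that the paper contains no proof of this Proposition at all: it is quoted from \cite{Liu-Liu-Zhou2}, where it is established by $\bC^*$ virtual localization on $\MP$. There one lifts $H^r$ to the equivariant class of the fixed configuration $r\cdot 0\in\mathrm{Sym}^r\bP^1$, so that the only fixed loci contributing to $\int_{[\MP]^{\vir}}\Br^*(H^r)$ are those whose branching is carried entirely by the rubber over $0$; restricting the fixed-point class to such a locus produces the weight product $\prod_{j=1}^{r}(ju)=r!\,u^r$, and the inverse equivariant Euler class of its virtual normal bundle, expanded in $1/u$, produces $(\psi^0)^{r-1}u^{-r}$, giving the identity. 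Your route is genuinely different --- non-equivariant, through the GIT quotient $Q=\mathrm{Sym}^r\bP^1//\bC^*$ --- but it has a gap that I do not think is repairable in the form you propose. The descended map $\overline{\Br}\colon \MP//\bC^*\dashrightarrow Q$ is only a \emph{rational} map: a non-fixed relative map can perfectly well have its branch divisor concentrated at $0$ and $\infty$ (this happens exactly on the target-degeneration loci, where bubbles carry part of the branching), and once more than half of the $r$ points sit over one end the configuration is GIT-unstable and has \emph{no} image in $Q$. These loci are divisors in $\MP//\bC^*$, not deep strata, so they cannot be ignored in an intersection-theoretic argument; consequently $\overline{\Br}^{\,*}[\mathrm{pt}_Q]$ is not a defined class, and your degeneration scheme --- peeling all $r$ branch points off to $0$ one at a time --- exits the semistable locus after roughly $r/2$ steps, i.e.\ it walks precisely into the region where the map being pulled back along does not exist. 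Compounding this, the step that is supposed to produce the answer, ``each collision with $0$ contributes a factor of $\psi^0$,'' is asserted rather than proved, and it is not a true divisor identity: the divisor where the target sprouts a bubble at the $0$ end is a boundary class of the rubber space, not $\psi^0$. Since this claim carries the entire content of the Proposition, the argument as written begs the question.

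There is an honest version of your ``$r!$ counts the orderings'' heuristic, but it requires replacing $Q$ by the moduli of expanded targets: the Losev--Manin-type space of chains of $\bP^1$'s with the two relative ends and an unordered degree-$r$ branch divisor. The rubber space maps to that space by a genuine \emph{morphism} (stability of rubber maps forces every target bubble to carry branching, so the induced pointed chain is stable), $\psi^0$ is by definition pulled back from there, and on that target space the point class equals $r!\,(\psi_0)^{r-1}$ because the ordered Losev--Manin space satisfies $\int\psi_0^{r-1}=1$ and the unordering divides by $r!$. What then remains is a virtual push-forward statement identifying the degree of the rubber virtual class over the target moduli with $\frac{1}{\amm}\int_{[\MP]^{\vir}}\Br^*(H^r)$; this is real work, and it is not supplied by ``functoriality of the virtual class under $\Br$,'' which does not exist as a general principle --- the tool that actually accomplishes exactly this bookkeeping in the literature is virtual localization, i.e.\ the cited proof. (A smaller inaccuracy: the $\bC^*$-action on configurations of distinct points of $\bC^*$ is not free --- configurations symmetric under a root of unity have nontrivial stabilizers --- only generically free; this affects your fibre-counting step but is repairable.)
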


The following Burnside formula will also be used (see \cite{Dij}).
\\
\begin{proposition}[Burnside formula]
$$\Phi^\bullet_{\nu,\mu}(\lambda)=\sum_{\chi}\lambda^{-\chi+l(\mu)+l(\nu)}\frac{\xm{H}}{(-\chi+l(\mu)+l(\nu))!}$$
where $\Phi^\bullet_{\nu,\mu}(\lambda)$ is defined combinatorially in section 1.
\end{proposition}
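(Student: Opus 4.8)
The plan is to prove the formula by translating the geometric double Hurwitz number $\xm{H}$ into a count of factorizations in the symmetric group $S_d$ (where $d=|\nu|=|\mu|$), and then to evaluate that count inside the center $Z(\bC[S_d])$ of the group algebra using the orthogonal idempotent basis. Set $r=\dis$; by Riemann-Hurwitz this is the number of simple branch points of a degree $d$ cover of $\bP^1$ with ramification profiles $\nu,\mu$ over $0,\infty$. First I would recall the standard identification of the geometric Hurwitz number with the monodromy count: a cover is determined up to isomorphism by its monodromy representation, so that
$$\xm{H}=\frac{1}{d!}\,\bigl|\{(\alpha,\tau_1,\dots,\tau_r,\beta): \alpha\in C_\nu,\ \beta\in C_\mu,\ \tau_i\text{ transpositions},\ \alpha\tau_1\cdots\tau_r\beta=1\}\bigr|,$$
where $C_\lambda\subset S_d$ denotes the conjugacy class of cycle type $\lambda$. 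The factor $\frac{1}{\amm}$ in the definition of $\xm{H}$ and the $\frac{1}{d!}$ here both account for automorphisms of a labelled cover, while the $\Br^*(H^r)$ factor fixes the $r$ interior branch points; this is exactly the geometric input underlying the preceding Proposition.

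Next I would rewrite this count algebraically. Let $C_\lambda=\sum_{g\in C_\lambda}g\in\bC[S_d]$ be the class sum; these form a basis of $Z(\bC[S_d])$. The number above is precisely the coefficient of the identity element in the central product $C_\nu\cdot C_{(2)}^{\,r}\cdot C_\mu$, since expanding this product enumerates all factorizations $\alpha\tau_1\cdots\tau_r\beta$ with the prescribed cycle types. I would then pass to the idempotent basis $\{e_\eta\}_{|\eta|=d}$, on which each class sum acts by its central character $C_\lambda=\sum_\eta\omega_\eta(\lambda)\,e_\eta$ with $\omega_\eta(\lambda)=\frac{|C_\lambda|\,\chi_\eta(\lambda)}{\chi_\eta(1)}$. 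Since $e_\eta=\frac{\chi_\eta(1)}{d!}\sum_g\chi_\eta(g^{-1})g$, the coefficient of the identity in $e_\eta$ is $\frac{\chi_\eta(1)^2}{d!}$, whence
$$\xm{H}=\frac{1}{d!}\sum_\eta\omega_\eta(\nu)\,\omega_\eta((2))^{\,r}\,\omega_\eta(\mu)\,\frac{\chi_\eta(1)^2}{d!}=\frac{1}{z_\nu z_\mu}\sum_\eta\chi_\eta(\nu)\,\chi_\eta(\mu)\,\omega_\eta((2))^{\,r},$$
where I use $|C_\lambda|=d!/z_\lambda$ to absorb the $\chi_\eta(1)$ factors.

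The crucial input is the evaluation of the transposition central character. I would invoke the classical content-sum identity
$$\omega_\eta((2))=\frac{|C_{(2)}|\,\chi_\eta((2))}{\chi_\eta(1)}=\sum_{(i,j)\in\eta}(j-i)=\frac{\kappa_\eta}{2},$$
the last equality following from a box-by-box summation matching the definition $\kappa_\eta=|\eta|+\sum_i(\eta_i^2-2i\eta_i)$. Substituting gives $\xm{H}=\frac{1}{z_\nu z_\mu}\sum_\eta\chi_\eta(\nu)\chi_\eta(\mu)(\kappa_\eta/2)^r$. Finally I would form the generating series: multiplying by $\lambda^r/r!$ and summing over $\chi$ (equivalently over $r\geq 0$) turns $(\kappa_\eta/2)^r$ into $e^{\kappa_\eta\lambda/2}$, yielding
$$\sum_\chi\lambda^{\dis}\frac{\xm{H}}{(\dis)!}=\sum_\eta\frac{\chi_\eta(\nu)}{z_\nu}\frac{\chi_\eta(\mu)}{z_\mu}e^{\kappa_\eta\lambda/2},$$
which is exactly the combinatorial definition of $\Phi^\bullet_{\nu,\mu}(\lambda)$ from Section 1.

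The main obstacle is the first step, namely the passage from the virtual, branch-morphism definition of $\xm{H}$ to the honest count of factorizations; everything after it is formal representation theory of $S_d$, with the content-sum identity as its only nontrivial ingredient. I expect to handle this step by the same degeneration/localization analysis underlying the preceding Proposition, where the integral of $(\psi^0)^{r-1}$ over $\MP//\bC^*$ is identified with the number of covers with fixed simple branch points; alternatively one may cite the standard equivalence of the geometric and combinatorial double Hurwitz numbers established in \cite{Liu-Liu-Zhou2}.
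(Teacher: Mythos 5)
Your proposal is correct, but note that the paper itself offers no proof of this proposition: it is stated as a quoted fact with a citation to \cite{Dij}, so there is no internal argument to compare against. Your Burnside/Frobenius computation supplies the proof the paper outsources, and the algebra checks out: the factorization count is the coefficient of the identity in the central product $C_\nu\cdot C_{(2)}^{\,r}\cdot C_\mu$; passing to the idempotent basis gives $\frac{1}{z_\nu z_\mu}\sum_\eta\chi_\eta(\nu)\chi_\eta(\mu)\,\omega_\eta((2))^r$ after using $|C_\lambda|=d!/z_\lambda$; and the content-sum evaluation $\omega_\eta((2))=\sum_{(i,j)\in\eta}(j-i)=\kappa_\eta/2$ matches the paper's normalization $\kappa_\eta=|\eta|+\sum_i(\eta_i^2-2i\eta_i)$ exactly. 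Summing over all $\chi$ (equivalently all $r\geq 0$) is harmless, since your identity holds term by term in $r$, so the character sum vanishes automatically for $r$ of the wrong parity and the exponential $e^{\kappa_\eta\lambda/2}$ assembles correctly into the Section 1 definition of $\Phi^\bullet_{\nu,\mu}(\lambda)$. The one step carrying genuine geometric content is the first one, which you correctly isolate: the proposition equates the virtual-class quantity $\xm{H}=\frac{1}{\amm}\int_{[\MP]^{\vir}}\Br^*(H^r)$ with a monodromy count, and this identification is not in \cite{Dij} (which is purely combinatorial/TQFT), nor is it proved in this paper; it rests on the standard fact that the fiber of $\Br$ over a generic point of $\mathrm{Sym}^r\bP^1$ consists of finitely many relative stable maps with no contracted components, counted with automorphism weights, as used in \cite{Liu-Liu-Zhou2} (see also \cite{Zhou}). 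Handling that step by citation, as you propose, is legitimate and is in effect what the paper does implicitly; the alternative of rederiving it from the localization/degeneration analysis would be substantially more work than the rest of your argument.
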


\subsection{The obstruction bundle}
Let
$$\pi:\cU\to \Mb$$
be the universal domain curve and let $\cT$ be the universal target. Then there is an evaluation map
$$F:\cU\to \cT$$
and a contraction map
$$\tpi: \cT\to \bP^1_a.$$
Let $\cD\subset \cU$ be the divisor corresponding to the $l(\mu)$ marked points $y_1,\cdots,y_{l(\mu)}$. Define
\begin{eqnarray*}
V_D&=&R^1\pi_*(\cO_{\cU}(-\mathcal{D}) )\\
V_{D_d}&=&R^1\pi_* \tilde{F}^*\cO_{\bP^1_a}(-p_{0}),
\end{eqnarray*}
where $\tilde{F}=\tilde\pi\circ F:\mathcal{U}\to \bP^1_a,p_1=\infty\in\bP^1_a$, and $p_{0}$ is the stack point of $\bP^1_a$. The fibers of $V_D$ and $V_{D_d}$ at
$$
\left[ f:(C,x_1,\ldots,x_n,y_1,\cdots,y_{l(\mu)})\to \bP^1_a[m]\ \right]\in \Mb
$$
are $H^1(C, \cO_C(-D))$ and $H^1(C, \tilde{f}^*\cO_{\bP^1_a}(-p_0))$,
respectively, where $D=y_1+\ldots+y_{l(\mu)}$ and $\tilde{f}=\pi[m]\circ f$. It is easy to see that the rank of the obstruction bundle
$$V=V_D\oplus V_{D_{d}}$$
is equal to the virtual dimension of $\Mb$, which is $-\chi+n+l(\mu)+\frac{d}{a}-\sum_{i=1}^{n}\frac{\gamma_i}{a}$.

We lift the $\bC^*$-action to the obstruction bundle $V$. It suffices to lift the $\bC^*$-action on $\bP^1_a$ to the line bundles $\cO_{\bP^1_a}(-p_{0})$ and $\cO_{\bP^1_a}$. Let the weights of the $\bC^*$-action on $\cO_{\bP^1_a}(-p_{0})$ at $p_0$ and $p_1$ be $-\tau-\frac{1}{a}$ and $-\tau$, respectively, and let the weights of the $\bC^*$-action on $\cO_{\bP^1_a}$ at $p_0$ and $p_1$ be $\tau$ and $\tau$, respectively, where $\tau\in\bZ$. In other words, if we write the obstruction bundle in the form of equivariant divisors, we have
\begin{eqnarray*}
V_D&=&R^1\pi_*(\tilde{F}^*\cO_{\bP^1_a}(\tau(ap_0-p_1))(-\mathcal{D}) )\\
V_{D_d}&=&R^1\pi_* \tilde{F}^*\cO_{\bP^1_a}(-p_{0}+\tau(p_1-ap_0)).
\end{eqnarray*}

Let
$$\xn{K}=\frac{1}{\am|\Aut(\gamma)|}\int_{[\Mb]^{\vir}}e(V).$$
Then $\xn{K}$ is a topological invariant. We will calculate $\xn{K}$ in the next section by virtual localization. Define the generating function $\xo{K}(\lambda;x)$ to be
$$\xo{K}(\lambda;x)=\sqrt{-1}^{l(\mu)-d}\sum_{\chi}\lambda^{-\chi+l(\mu)}x_\gamma\xn{K}.$$

\section{Virtual localization}
In this section, we calculate $\xn{K}$ by virtual localization. We will express $\xn{K}$ in terms of one-partition Hodge integrals and double Hurwitz numbers. Then we can obtain Theorem 2 by Burnside formula.

The computation in this section is similar to that in Appendix A of \cite{Liu-Liu-Zhou1} and can be viewed as the orbifold generalization of it. The main difference in our paper is that one needs to compute the weights of sections of orbifold line bundles over orbi-curves. These orbi-curves are constructed via root constructions which produce orbifold line bundles over them by definition. All of these involve new techniques in studying the Picard group of an algebraic stack which comes from the root construction. The readers are referred to \cite{Cad} for the general settings of the root construction and to \cite{Abr-Gra-Vis} for the application of the root construction to Gromov-Witten theory.

\subsection{Fixed points}
The connected components of the $\bC^*$ fixed points set of $\Mb$ are parameterized by labeled graphs. We first introduce some graph notations which are similar to those in \cite{Liu-Liu-Zhou1}.

Let
$$\left[ f:(C,x_1,\ldots,x_n,y_1,\cdots,y_{l(\mu)})\to \bP^1_a[m]\right]\in\Mb $$
be a fixed point of the $\bC^*$-action. The restriction of the map
$$ \tilde{f}=\pi[m]\circ f: C\to \bP^1_a$$
to an irreducible component of $C$ is either a constant map to one of the $\bC^*$
fixed points $p_0, p_1$ or a cover of $\bP^1_a$ which is fully
ramified over $p_0$ and $p_1$. We associate a labeled graph $\Gamma$ to
the $\bC^*$ fixed point
$$\left[ f:(C,x_1,\ldots,x_n,y_1,\cdots,y_{l(\mu)})\to \bP^1_a[m] \right]$$ as follows:
\begin{enumerate}
\item We assign a vertex $v$ to each connected
component $C_v$ of $\tilde{f}^{-1}(\{p_0,p_1\})$, a label
$i(v)=i$ if $\tilde{f}(C_v)=p_i$, where $i=0,1$, and a label $g(v)$
which is the arithmetic genus of $C_v$ (we define $g(v)=0$ if $C_v$ is a
point). For $i(v)=0$, we define $n(v)$ to be the number of marked points on $C_v$. Denote by $V(\Gamma)^{(i)}$ the set of vertices with $i(v)=i$,
where $i=0,1$. Then the set $V(\Gamma)$ of vertices of the graph $\Gamma$
is a disjoint union of $V(\Gamma)^{(0)}$ and $V(\Gamma)^{(1)}$. For $v\in V(\Gamma)^{(0)}$ define
\begin{eqnarray*}
r_0(v)=&2g(v)-2 + \val(v)+n(v), & v\in V(\Gamma)^{(0)},\\
\end{eqnarray*}
\begin{eqnarray*}
V^I(\Gamma)^{(0)}&=&\{v\in V(\Gamma)^{(0)}:g(v)=0,\val(v)=1,n(v)=0\},\\
V^{I,I}(\Gamma)^{(0)}&=&\{v\in V(\Gamma)^{(0)}:g(v)=0,\val(v)=1,n(v)=1\},\\
V^{II}(\Gamma)^{(0)}&=&\{v\in V(\Gamma)^{(0)}:g(v)=0,\val(v)=2,n(v)=0\},\\
V^S(\Gamma)^{(0)}&=&\{v\in V(\Gamma)^{(0)}:r_0(v)> 0\}.\\
\end{eqnarray*}

\item We assign an edge $e$ to each rational irreducible component $C_e$ of $C$
such that $\tilde{f}|_{C_e}$ is not a constant map.
Let $d(e)$ be the degree of $\tilde{f}|_{C_e}$.
Then $\tilde{f}|_{C_e}$ is fully ramified over $p_0$ and $p_1$.
Let $E(\Gamma)$ denote the set of edges of $\Gamma$.

\item The set of flags of $\Gamma$ is given by
$$F(\Gamma)=\{(v,e):v\in V(\Gamma), e\in E(\Gamma),
C_v\cap C_e\neq \emptyset \}.$$

\item For each $v\in V(\Gamma)$, define
$$d(v)=\sum_{(v,e)\in F(\Gamma)}d(e),$$
and let $\nu(v)$ be the partition of $d(v)$
determined by $\{d(e): (v,e)\in F(\Gamma)\}$ and let $\nu$ be the partition of $d$ determined by $\{d(e): e\in E(\Gamma)\}$.
When the target is $\bP^1_a[m]$, where $m>0$,
we assign an additional label for each
$v\in V(\Gamma)^{(1)}$:
let $\mu(v)$ be the partition of
$d(v)$ determined by the ramification of
$f|_{C_v}:C_v\to\bP^1_a(m)$ over $p_1^{(m)}$.
\end{enumerate}
Note that for $v\in V(\Gamma)^{(1)}$,
$\nu(v)$ coincides with the partition of $d(v)$
determined by the ramification of $f|_{C_v}:C_v\to \bP^1_a(m)$
over $p_1^{(0)}$.

Let $\cM_{\nu_i}$ be the moduli space of $\bC^*$-fixed degree $\nu_i$ covers of $\bP^1_a$ with stack structure given by $\nu_i\textrm{ (mod $a$)}$. Then the $\bC^*$-fixed locus can be identified with
$$\bigsqcup_{\chi^0+\chi^1-2l(v)=\chi}(\Md\times_{\bar{I}\cB\bZ_{a}^{l(\nu)}}\cM_{\nu_1}\times\cdots\times\cM_{\nu_{l(\nu
)}}\times\MQ//\bC^*)/\Aut(\nu),$$
where $\bar{I}\cB\bZ_{a}$ is the rigidified inertia stack of $\cB\bZ_{a}$. Therefore, we can calculate our integral over
$$\bigsqcup_{\chi^0+\chi^1-2l(v)=\chi}\Md\times\MQ//\bC^*$$
provided we include the following factor:

$$\frac{1}{|\Aut(\nu)|}\prod_{i=1}^{l(\nu)}\frac{1}{\nu_i}\frac{a}{b_i},$$
where $b_i=\frac{a}{\textrm{gcd}(a,\nu_i)}$ is the order of $\nu_i\in \bZ_a$.

\subsection{Virtual normal bundle}
Let
$$[ f:(C,x_1,\ldots,x_n,y_1,\cdots,y_{l(\mu)})\to \bP^1_a[m]]\in\Mb $$
be a fixed point of the $\bC^*$-action associated to $\Gamma$ and let $ \tilde{f}=\pi[m]\circ f: C\to \bP^1_a $. Let
$$
B_1 = \Ext^0(\Omega_C(D+D'), \cO_C),\ \ \
B_2 = H^0(C,\tilde{f}^*\cO_{\bP^1_a}(p_0)),\ \ \
B_3 = \oplus_{l=0}^{m-1} H_{\mathrm{et}}^0(\mathbf{R}_l^\bullet),
$$
$$
B_4 = \Ext^1(\Omega_C(D+D'), \cO_C),\ \ \
B_5 = H^1(C,\tilde{f}^*\cO_{\bP^1_a}(p_0)),\ \ \
B_6 = \oplus_{l=0}^{m-1} H_{\mathrm{et}}^1(\mathbf{R}_l^\bullet),
$$
where $D=y_1+\cdots+y_{l(\mu)},D'=x_1+\cdots+x_n$, and
\begin{eqnarray*}
H^0_{\mathrm{et}}(\mathbf{R}_{l}^\bullet)&\cong&  \bigoplus_{q\in
f^{-1}(p_1^{(l)})}T_q \left(f^{-1}(\bP^1_{(l)})\right)\otimes
T^*_q \left(f^{-1}(\bP^1_{(l)})\right)\cong \bC^{\oplus n_l},\\
H^1_{\mathrm{et}}(\mathbf{R}_{l}^\bullet)&\cong&
(T_{p_1^{(l)}}\bP^1_{(l)}\otimes
T_{p_1^{(l)}}\bP^1_{(l+1)})^{\oplus(n_l-1)},
\end{eqnarray*}
where $n_l$ is the number of nodes over $p_1^{(l)}$. Let $\tilde{B}_i$ denote the moving part of $B_i$ under the $\bC^*$-action. Then we have (see Appendix A in \cite{Liu-Liu-Zhou1})
$$
\frac{1}{e_T(N_\Gamma^{\mathrm{vir}} ) }
=\frac{e_T(\tilde{B}_1)e_T(\tilde{B}_5)e_T(\tilde{B}_6) }{e_T(\tilde{B}_2)e_T(\tilde{B}_4)},
$$
where $e_T$ denotes the $\bC^*$-equivariant Euler class.

Note that
$\tilde{B}_3=0$, and
$$
\tilde{B}_6=\left\{\begin{array}{ll}0,& m=0,\\
H^1_{\mathrm{et}}(\mathrm{R}_0^\bullet)=
(T_{p_1^{(0)}}\bP^1_{(0)}\otimes T_{p_1^{(0)}}\bP^1_{(1)})^{\oplus(n_0-1)}, &
m>0.
\end{array}\right.
$$

\subsubsection{The target is $\bP^1_a$}
In this case,
\begin{eqnarray*}
\tilde{B}_1&=&\bigoplus_{v\in V^I(\Gamma)^{(0)}}\left(\frac{1}{d}\right)\\
 \tB_4&=&\bigoplus_{v\in V^{II}(\Gamma)^{(0)}}\left(\sum_{(v,e)\in F(\Gamma)}\frac{\Gcd(a,d(e))}{ad(e)}\right)\oplus \bigoplus_{v\in V^S(\Gamma)^{(0)} }\left(\bigoplus_{(v,e)\in F(\Gamma)}T_{q_{(v,e)}}C_v\otimes T_{q_{(v,e)}}C_e\right).
\end{eqnarray*}
So
\begin{eqnarray*}
\frac{e_T(\tilde{B}_1)}{e_T(\tilde{B}_4)}
&=&\prod_{v\in V^I(\Gamma)^{(0)}}\frac{u}{d(v)}
\prod_{v\in V^{II}(\Gamma)^{(0)} }
   \left( \sum_{(v,e)\in F(\Gamma)}\frac{\Gcd(a,d(e))u}{ad(e)}\right)^{-1}\\
&&\cdot\prod_{v\in V^S(\Gamma)^{(0)} }
           \left(\prod_{(v,e)\in F(\Gamma)}
          \frac{1}{(\frac{u}{d(e)}-\bar{\psi}_{(v,e)})\frac{\Gcd(a,d(e))}{a}}\right).
\end{eqnarray*}
Consider the normalization sequence
\begin{eqnarray*}
0 &\to&
\tilde{f}^*\cO_{\bP^1_a}(p_0)\to
\bigoplus_{v\in V^{S}(\Gamma)^{(0)}}
(\tilde{f}|_{C_v})^*\cO_{\bP^1_a}(p_0)
\oplus  \bigoplus_{e\in E(\Gamma)}(\tilde{f}|_{C_e})^*\cO_{\bP^1_a}(p_0)\\
  &\to& \bigoplus_{v\in V^{II}(\Gamma)^{(0)}}(\tilde{f}|_{q_v})^* \cO_{\bP^1_a}(p_0)
\oplus\bigoplus_{v\in V^S(\Gamma)^{(0)}}\left(\bigoplus_{(v,e)\in F}(\tilde{f}|_{q_{(v,e)}})^*
     \cO_{\bP^1_a}(p_0)\right)\to 0,
\end{eqnarray*}
where $q_v$ and $ q_{(v,e)}$ are the corresponding nodes.
The corresponding long exact sequence reads
\begin{eqnarray*}
0& \to & H^0(C,\tilde{f}^*\cO_{\bP^1_a}(p_0))
  \to  \bigoplus_{v\in V^{S}(\Gamma)^{(0)} }
         H^0(C_v,(\tilde{f}|_{C_v})^*\cO_{\bP^1_a}(p_0))
  \oplus \bigoplus_{e\in E(\Gamma)}
\left(\bigoplus_{l=0}^{[\frac{d(e)}{a}]}\left(\frac{l}{d(e)}\right)\right) \\
&\to&\bigoplus_{v\in V^{II}(\Gamma)^{(0)} } H^0(q_v,(\tilde{f}|_{q_v})^* \cO_{\bP^1_a}(p_0))
\oplus\bigoplus_{v\in V^S(\Gamma)^{(0)} }\left(\bigoplus_{(v,e)\in F}
     H^0(q_{(v,e)},(\tilde{f}|_{q_{(v,e)}})^*\cO_{\bP^1_a}(p_0))\right) \\
 &\to & H^1(C,\tilde{f}^*\cO_{\bP^1_a}(p_0))
 \to  \bigoplus_{v\in V^{S}(\Gamma)^{(0)} }
         H^1(C_v,(\tilde{f}|_{C_v})^*\cO_{\bP^1_a}(p_0)) \to 0.
\end{eqnarray*}
The term $H^0(C_v,(\tilde{f}|_{C_v})^*\cO_{\bP^1_a}(p_0))$ contributes 0 unless all monodromies around loops on $C_v$ are trivial. The term $H^0(q_v,(\tilde{f}|_{q_v})^* \cO_{\bP^1_a}(p_0))$ (resp. $H^0(q_{(v,e)},(\tilde{f}|_{q_{(v,e)}})^*\cO_{\bP^1_a}(p_0))$) contributes zero unless $q_v$ (resp. $q_{(v,e)}$) is not a stack point. So
$$
\frac{e_T(\tilde{B}_5)}{e_T(\tilde{B}_2)}=
\prod_{v\in V(\Gamma)^{(0)}}
\left(\Lambda_{g(v)}^{\vee,U}(\frac{u}{a})(\frac{u}{a})^{\sum_{(v,e)\in F(\Gamma)}\delta_{0,\langle \frac{d(e)}{a}\rangle}-\delta_v}  \right)
\prod_{e\in E(\Gamma)}\left(\frac{d(e)^{[\frac{d(e)}{a}]}}{[\frac{d(e)}{a}]!} u^{-[\frac{d(e)}{a}]}\right),
$$
where $\langle x\rangle=x-[x]$ and
$$\delta_{v}=\left\{\begin{array}{ll}1, &\textrm{if all monodromies around loops on $C_v$ are trivial},\\
0, &\textrm{otherwise}.\end{array} \right.$$
So we have the following Feynman rules:
$$\frac{1}{e_T(N_{\Gamma}^{\mathrm{vir} })}
= \prod_{v \in V(\Gamma)^0} A_v
\prod_{e \in E(\Gamma)} A_e,
$$
where
\begin{eqnarray*}
A_v &= &\left\{
\begin{array}{ll}
\Lambda_{g(v)}^{\vee,U}(\frac{u}{a})(\frac{u}{a})^{-\delta_v}\prod_{(v,e)\in F(\Gamma)}
\frac{(\frac{u}{a})^{\delta_{0,\langle \frac{d(e)}{a}\rangle}}}{(\frac{u}{d(e)} - \bar{\psi}_{(v, e)})\frac{\Gcd(a,d(e))}{a}},
   & v \in V^S(\Gamma)^{(0)}, \\
\frac{u}{d(v)}, & v \in V^I(\Gamma)^{(0)}, \\
1, & v \in V^{I,I}(\Gamma)^{(0)},\\
\frac{1}{\frac{\Gcd(a,d(e_1))}{d(e_1)} + \frac{\Gcd(a,d(e_2))}{d(e_2)}}(\frac{u}{a})^{-1+\delta_{0,\langle \frac{d(e_1)}{a}\rangle}},
   & v \in V^{II}(\Gamma)^{(0)},\\
&(v, e_1), (v, e_2) \in F(\Gamma),
\end{array} \right.\\
 A_e&=& \frac{d(e)^{[\frac{d(e)}{a}]}}{[\frac{d(e)}{a}]!} u^{-[\frac{d(e)}{a}]}.
\end{eqnarray*}

\subsubsection{The target is $\bP^1_a[m]$, $m>0$}
A similar computation shows that
\begin{eqnarray*}
\frac{e_T(\tilde{B}_1)}{e_T(\tilde{B}_4)}
&=&\prod_{v\in V^I(\Gamma)^{(0)}}\frac{u}{d(v)}
\prod_{v\in V^{II}(\Gamma)^{(0)} }
   \left( \sum_{(v,e)\in F(\Gamma)}\frac{\Gcd(a,d(e))u}{ad(e)}\right)^{-1}\\
&&\cdot\prod_{v\in V^S(\Gamma)^{(0)} }
           \left(\prod_{(v,e)\in F(\Gamma)}
          \frac{1}{(\frac{u}{d(e)}-\bar{\psi}_{(v,e)})\frac{\Gcd(a,d(e))}{a}}\right)
          \prod_{v\in V(\Gamma)^{(1)} }
        \left(\prod_{(v,e)\in F(\Gamma)}
          \frac{1}{\frac{-u}{d(e)}-\psi_{(v,e)}}\right).
\end{eqnarray*}
For $\frac{e_T(\tilde{B}_5)}{e_T(\tilde{B}_2)}$, we also have a similar long exact sequence
\begin{eqnarray*}
0& \to & H^0(C,\tilde{f}^*\cO_{\bP^1_a}(p_0))\\
  &\to &  \bigoplus_{v\in V^{S}(\Gamma)^{(0)} }
         H^0(C_v,(\tilde{f}|_{C_v})^*\cO_{\bP^1_a}(p_0))
  \oplus \bigoplus_{v\in V(\Gamma)^{(1)} }H^0(C_v,\cO_{C_v})\otimes (0)
  \oplus \bigoplus_{e\in E(\Gamma)}
\left(\bigoplus_{l=0}^{[\frac{d(e)}{a}]}\left(\frac{l}{d(e)}\right)\right) \\
&\to&\bigoplus_{v\in V^{II}(\Gamma)^{(0)} } H^0(q_v,(\tilde{f}|_{q_v})^* \cO_{\bP^1_a}(p_0))
\oplus\bigoplus_{v\in V^S(\Gamma)^{(0)} }\left(\bigoplus_{(v,e)\in F}
     H^0(q_{(v,e)},(\tilde{f}|_{q_{(v,e)}})^*\cO_{\bP^1_a}(p_0))\right)
     \oplus\\
&&\bigoplus_{v\in V(\Gamma)^{(1)} }\left(\bigoplus_{(v,e)\in F}(0)\right) \\
 &\to & H^1(C,\tilde{f}^*\cO_{\bP^1_a}(p_0))\\
 &\to & \bigoplus_{v\in V^{S}(\Gamma)^{(0)} }
         H^1(C_v,(\tilde{f}|_{C_v})^*\cO_{\bP^1_a}(p_0))
         \oplus  \bigoplus_{v\in  V(\Gamma)^{(1)} }
H^1(C_v,\cO_{C_v})\otimes (0)\to 0.
\end{eqnarray*}
So $\frac{e_T(\tilde{B}_5)}{e_T(\tilde{B}_2)}$ has the same expression as in the previous case.
Finally,
$$
\tilde{B}_6=\left(T_{p_1^{(0)}}\bP^1_{(0)}
             \otimes T_{p_1^{(0)}}\bP^1_{(1)}\right)^{|E(\Gamma)|-1},
$$
so
$$
e_T(\tilde{B}_6)=(-u-\psi^0)^{|E(\Gamma)|-1},
$$
where $\psi^0$ is the target $\psi$ class as in section 3.3. Note that $\psi^0=d(e)\psi_{(v,e)}$ for $v\in V(\Gamma)^{(1)}$, $(v,e)\in F$.

Therefore, we have the following Feynman rules:
$$\frac{1}{e_T(N_{\Gamma}^{\mathrm{vir} })}
= \frac{1}{-u-\psi^0}\prod_{v \in V(\Gamma)} A_v
\prod_{e \in E(\Gamma)} A_e,
$$
where
\begin{eqnarray*}
A_v &= &\left\{
\begin{array}{ll}
\prod_{(v,e)\in F(\Gamma)}\frac{-u-\psi^0}{\frac{-u}{d(e)}-\psi_{(v,e)} }
 =\prod_{(v,e)\in F(\Gamma)} d(e), &
 v \in V(\Gamma)^{(1)}\\
\Lambda_{g(v)}^{\vee,U}(\frac{u}{a})(\frac{u}{a})^{-\delta_v}\prod_{(v,e)\in F(\Gamma)}
\frac{(\frac{u}{a})^{\delta_{0,\langle \frac{d(e)}{a}\rangle}}}{(\frac{u}{d(e)} - \bar{\psi}_{(v, e)})\frac{\Gcd(a,d(e))}{a}},
   & v \in V^S(\Gamma)^{(0)}, \\
\frac{u}{d(v)}, & v \in V^I(\Gamma)^{(0)}, \\
1, & v \in V^{I,I}(\Gamma)^{(0)},\\
\frac{1}{\frac{\Gcd(a,d(e_1))}{d(e_1)} + \frac{\Gcd(a,d(e_2))}{d(e_2)}}(\frac{u}{a})^{-1+\delta_{0,\langle \frac{d(e_1)}{a}\rangle}},
   & v \in V^{II}(\Gamma)^{(0)},\\
&(v, e_1), (v, e_2) \in F(\Gamma),
\end{array} \right.\\
 A_e&=& \frac{d(e)^{[\frac{d(e)}{a}]}}{[\frac{d(e)}{a}]!} u^{-[\frac{d(e)}{a}]}.
\end{eqnarray*}

\subsection{The bundle $V_{D}$}\label{D}
The short exact sequence
$$
0\to\cO_C(-D)\to \cO_C \to \cO_D\to 0
$$
gives rise to a long exact sequence
\begin{eqnarray*}
0&\to& H^0(C,\cO_C(-D) )\to H^0(C,\cO_C)\to
\bigoplus_{i=1}^{l(\mu)}\cO_{y_i}\\
&\to& H^1(C,\cO_C(-D) )\to H^1(C,\cO_C)\to 0.
\end{eqnarray*}
The representations of $\bC^*$ are
$$
0\to\bigoplus_{i=1}^{k}(\tau)\to \bigoplus_{i=1}^{l(\mu)}(\tau) \to
H^1(C,\cO_C(-D))\otimes (\tau)\to H^1(C,\cO_C)\otimes(\tau)\to 0,
$$
where $k$ is the number of connected components of $C$.
So
$$
e_T(V_D)=e_T(V_0)(\tau u)^{l(\mu)-k},
$$
where
$$V_0=R^1\pi_*\cO_{\mathcal{U}}.$$

\subsubsection{The target is $\bP^1_a$}
In this case, the number of connected components of $C$ is $|V(\Gamma)^0|$. Consider the normalization sequence
$$
0\to \cO_C\to\bigoplus_{v\in V(\Gamma)^0}\cO_{C_{v}}\oplus \bigoplus_{i=1}^{l(\mu)} \cO_{C_{e_i}}
 \to \bigoplus_{i=1}^{l(\mu)}\cO_{q_i}\to 0.
$$
The corresponding $\bC^*$-representation of the long exact sequence reads
\begin{eqnarray*}
0& \to& \bigoplus_{v\in V(\Gamma)^0}  (\tau)
   \to \bigoplus_{v\in V(\Gamma)^0}(\tau)\oplus
      \bigoplus_{i=1}^{l(\mu)} (\tau)
   \to\bigoplus_{i=1}^{l(\mu)} (\tau)\\
& \to & H^1(C,\cO_C)\otimes(\tau)
  \to \bigoplus_{v\in V(\Gamma)^0}H^1(C_{v},\cO_{C_{v}})\otimes(\tau)\to 0.
\end{eqnarray*}
So
$$
i_\Gamma^* e_T(V_0)=\prod_{v\in V(\Gamma)^0}\Lambda_{g(v)}^{\vee,1}(\tau u),
$$
where 1 denotes the trivial representation of $\bZ_a$.

Therefore, we have
$$
i_{\Gamma}^* e_T(V_D)=\prod_{v\in V(\Gamma)^0}A^D_{v},
$$
where
$$
A_{v}^D = \Lambda_{g(v)}^{\vee,1}(\tau u) \cdot
(\tau u)^{\val(v)-1}.
$$

\subsubsection{The target is $\bP^1_a[m]$, $m>0$}
Consider the normalization sequence
\begin{eqnarray*}
0 &\to&\cO_C\to
\bigoplus_{v\in V^{S}(\Gamma)^{(0)}\cup V(\Gamma)^{(1)} }\cO_{C_v}
 \oplus \bigoplus_{e\in E(\Gamma)}\cO_{C_e}\\
&\to&\bigoplus_{v\in V^{II}(\Gamma)^{(0)}} \cO_{q_v}
\oplus\bigoplus_{v\in V^S(\Gamma)^{(0)}\cup V(\Gamma)^{(1)}}\left(\bigoplus_{(v,e)\in F}
     \cO_{q_{(v,e)}}\right)\to 0.
\end{eqnarray*}
The corresponding $\bC^*$-representation of the long exact sequence reads
\begin{eqnarray*}
0& \to &\bigoplus_{i=1}^{k}(\tau)
\to \bigoplus_{v\in V^{S}(\Gamma)^{(0)}\cup V(\Gamma)^{(1)} }(\tau)
     \oplus \bigoplus_{e\in E(\Gamma)}(\tau)\\
&\to&\bigoplus_{v\in V^{II}(\Gamma)^{(0)}} (\tau)
\oplus\bigoplus_{v\in V^S(\Gamma)^{(0)}\cup V(\Gamma)^{(1)}}\left(\bigoplus_{(v,e)\in F}
     (\tau)\right)\\
 &\to & H^1(C,\cO_C)\otimes(\tau)
\to \bigoplus_{v\in V^{S}(\Gamma)^{(0)}\cup V(\Gamma)^{(1)} }
           H^1(C_v,\cO_{C_v})\otimes(\tau)\to 0,
\end{eqnarray*}
where $k$ is the number of connected components of $C$.
So
\begin{eqnarray*}
i_\Gamma^* e_T(V_0)&=&(\tau u)^{|E(\Gamma)|-|V(\Gamma)|+k}
\prod_{v\in V(\Gamma)}\Lambda_{g(v)}^{\vee,1}(\tau u)\\
i_\Gamma^* e_T(V_D)&=&(\tau u)^{|E(\Gamma)|-|V(\Gamma)|+l(\mu)}
\prod_{v\in V(\Gamma)}\Lambda_{g(v)}^{\vee,1}(\tau u).\\
\end{eqnarray*}
We have the following Feynman rules:
$$i_{\Gamma}^*e_T(V_D) = \prod_{v\in V(\Gamma)} A_v^D,$$
where
$$
 A_v^{D} = \begin{cases}
\Lambda_{g(v)}^{\vee,1}(\tau u) \cdot (\tau u)^{\val(v)-1},
& v \in V(\Gamma)^{(0)}, \\
\Lambda_{g(v)}^{\vee,1}(\tau u) \cdot
(\tau u)^{l(\mu(v))-1}
& v \in V(\Gamma)^{(1)}.
\end{cases} \\
$$

\subsection{The bundle $V_{D_d}$} \label{Dd}
\subsubsection{The target is $\bP^1_a$}
In this case, $k=|V(\Gamma)^0|$. Consider the normalization sequence
\begin{eqnarray*}
0 &\to&
\tilde{f}^*\cO_{\bP^1_a}(-p_0)\to
\bigoplus_{v\in V^{S}(\Gamma)^{(0)}}
(\tilde{f}|_{C_v})^*\cO_{\bP^1_a}(-p_0)
\oplus  \bigoplus_{e\in E(\Gamma)}(\tilde{f}|_{C_e})^*\cO_{\bP^1_a}(-p_0)\\
  &\to& \bigoplus_{v\in V^{II}(\Gamma)^{(0)}}(\tilde{f}|_{q_v})^* \cO_{\bP^1_a}(-p_0)
\oplus\bigoplus_{v\in V^S(\Gamma)^{(0)}}\left(\bigoplus_{(v,e)\in F}(\tilde{f}|_{q_{(v,e)}})^*
     \cO_{\bP^1_a}(-p_0)\right)\to 0.
\end{eqnarray*}
The corresponding long exact sequence reads
\begin{eqnarray*}
0& \to & H^0(C,\tilde{f}^*\cO_{\bP^1_a}(-p_0))
  \to  \bigoplus_{v\in V^{S}(\Gamma)^{(0)} }
         H^0(C_v,(\tilde{f}|_{C_v})^*\cO_{\bP^1_a}(-p_0)) \\
&\to&\bigoplus_{v\in V^{II}(\Gamma)^{(0)} } H^0(q_v,(\tilde{f}|_{q_v})^* \cO_{\bP^1_a}(-p_0))
\oplus\bigoplus_{v\in V^S(\Gamma)^{(0)} }\left(\bigoplus_{(v,e)\in F}
     H^0(q_{(v,e)},(\tilde{f}|_{q_{(v,e)}})^*\cO_{\bP^1_a}(-p_0))\right) \\
 &\to & H^1(C,\tilde{f}^*\cO_{\bP^1_a}(-p_0))\\
 &\to & \bigoplus_{v\in V^{S}(\Gamma)^{(0)} }
         H^1(C_v,(\tilde{f}|_{C_v})^*\cO_{\bP^1_a}(-p_0))\oplus \bigoplus_{e\in E(\Gamma)}
\left(\bigoplus_{l=1}^{[\frac{d(e)}{a}]-\delta_{0,\langle \frac{d(e)}{a}\rangle}}\left(-\tau-\frac{l}{d(e)}\right)\right) \to 0.
\end{eqnarray*}
We have the following Feynman rules:
$$i_{\Gamma}^*e_T(V_{D_d}) = \prod_v A_v^{D_d} \cdot
\prod_{e\in E(\Gamma)} A_e^{D_d},$$
where
\begin{eqnarray*}
A_v &= &\left\{
\begin{array}{ll}
\Lambda_{g(v)}^{\vee,U^\vee}((-\tau-\frac{1}{a})u)((-\tau-\frac{1}{a})u)^{-\delta_v}\\ \prod_{(v,e)\in F(\Gamma)}
((-\tau-\frac{1}{a})u)^{\delta_{0,\langle \frac{d(e)}{a}\rangle}},
    &v \in V^S(\Gamma)^{(0)}, \\
1, & v \in V^I(\Gamma)^{(0)}, \\
1, & v \in V^{I,I}(\Gamma)^{(0)},\\
\left((-\tau-\frac{1}{a})u\right)^{\delta_{0,\langle \frac{d(e_1)}{a}\rangle}},
    &v \in V^{II}(\Gamma)^{(0)},(v, e_1),(v,e_2) \in F(\Gamma),
\end{array} \right.\\
 A_e&=& \frac{\prod_{l=1}^{[\frac{d(e)}{a}]-\delta_{0,\langle \frac{d(e)}{a}\rangle}}(d(e)\tau+l)}{d(e)^{[\frac{d(e)}{a}]-\delta_{0,\langle \frac{d(e)}{a}\rangle}}} (-u)^{[\frac{d(e)}{a}]-\delta_{0,\langle \frac{d(e)}{a}\rangle}}.
\end{eqnarray*}

\subsubsection{The target is $\bP^1_a[m]$, $m>0$}
We have a similar long exact sequence
\begin{eqnarray*}
0& \to & H^0(C,\tilde{f}^*\cO_{\bP^1_a}(-p_0))\\
  &\to & \bigoplus_{v\in V^{S}(\Gamma)^{(0)} }
         H^0(C_v,(\tilde{f}|_{C_v})^*\cO_{\bP^1_a}(-p_0))\oplus \bigoplus_{v\in V(\Gamma)^{(1)}}H^0(C_v,\cO_{C_v})\otimes (-\tau) \\
&\to&\bigoplus_{v\in V^{II}(\Gamma)^{(0)} } H^0(q_v,(\tilde{f}|_{q_v})^* \cO_{\bP^1_a}(-p_0))
\oplus\bigoplus_{v\in V^S(\Gamma)^{(0)} }\left(\bigoplus_{(v,e)\in F}
     H^0(q_{(v,e)},(\tilde{f}|_{q_{(v,e)}})^*\cO_{\bP^1_a}(-p_0))\right)\\
  &&   \oplus \bigoplus_{v\in V(\Gamma)^{(1)}}\left(\bigoplus_{(v,e)\in F}
     (-\tau)\right) \\
 &\to & H^1(C,\tilde{f}^*\cO_{\bP^1_a}(-p_0))\\
 &\to & \bigoplus_{v\in V^{S}(\Gamma)^{(0)} }
         H^1(C_v,(\tilde{f}|_{C_v})^*\cO_{\bP^1_a}(-p_0))\oplus \bigoplus_{v\in V(\Gamma)^{(1)}}H^1(C_v,\cO_{C_v})\otimes (-\tau)\\
 &&        \oplus \bigoplus_{e\in E(\Gamma)}
\left(\bigoplus_{l=1}^{[\frac{d(e)}{a}]-\delta_{0,\langle \frac{d(e)}{a}\rangle}}\left(-\tau-\frac{l}{d(e)}\right)\right) \to 0.
\end{eqnarray*}
So we have the following Feynman rules:
$$i_{\Gamma}^*e_T(V_{D_d}) = \prod_v A_v^{D_d} \cdot
\prod_{e\in E(\Gamma)} A_e^{D_d},$$
where
\begin{eqnarray*}
A_v &= &\left\{
\begin{array}{ll}
\Lambda_{g(v)}^{\vee,U^\vee}((-\tau-\frac{1}{a})u)((-\tau-\frac{1}{a})u)^{-\delta_v}\\ \prod_{(v,e)\in F(\Gamma)}
((-\tau-\frac{1}{a})u)^{\delta_{0,\langle \frac{d(e)}{a}\rangle}},
    &v \in V^S(\Gamma)^{(0)}, \\
1, & v \in V^I(\Gamma)^{(0)}, \\
1, & v \in V^{I,I}(\Gamma)^{(0)},\\
\left((-\tau-\frac{1}{a})u\right)^{\delta_{0,\langle \frac{d(e_1)}{a}\rangle}},
    &v \in V^{II}(\Gamma)^{(0)},(v, e_1),(v,e_2) \in F(\Gamma),\\
    \Lambda_{g(v)}^{\vee,1}(-\tau u) \cdot
(-\tau u)^{\val(v)-1}, & v \in V(\Gamma)^{(1)},
\end{array} \right.\\
 A_e&=& \frac{\prod_{l=1}^{[\frac{d(e)}{a}]-\delta_{0,\langle \frac{d(e)}{a}\rangle}}(d(e)\tau+l)}{d(e)^{[\frac{d(e)}{a}]-\delta_{0,\langle \frac{d(e)}{a}\rangle}}} (-u)^{[\frac{d(e)}{a}]-\delta_{0,\langle \frac{d(e)}{a}\rangle}}.
\end{eqnarray*}
\\
\subsection{Contribution from each graph}
By Mumford's relation \cite{Bry-Gra-Pan}, we have
$$\Lambda_{g(v)}^{\vee,1}(-\tau u)\Lambda_{g(v)}^{\vee,1}(\tau u)=(-1)^{g(v)}(\tau u)^{2g(v)}, v \in V(\Gamma)^{(1)}.$$
So we have
$$\frac{i_{\Gamma}^*e_T(V)}{e_T(N_{\Gamma}^{\mathrm{vir} })}=A^{0}A^{1},$$
where
\begin{eqnarray*}
A^0&=&a_{\nu}\sqrt{-1}^{l(\nu)-|\nu|}\prod_{i=1}^{l(\nu)}\frac{\prod_{l=1}^{[\frac{\nu_i}{a}]}(\nu_{i}\tau+l)}{[\frac{\nu_i}
{a}]!(u-\nu_{i}\bar{\psi_i})\frac{\Gcd(\nu_{i},a)}{a}}\\
&&\cdot\prod_{v \in V(\Gamma)^{(0)}}(-1)^{\sum_{(v,e)\in F(\Gamma)}[\frac{d(e)}{a}]-\delta_v}\Lambda_{g(v)}^{\vee,U}(\frac{u}{a})\Lambda_{g(v)}^{\vee,U^\vee}((-\tau-\frac{1}{a})u)
\Lambda_{g(v)}^{\vee,1}(\tau u)\\
&&\cdot\left(\frac{u}{a}\right)^{\sum_{(v,e)\in F(\Gamma)}\delta_{0,\langle \frac{d(e)}{a}\rangle}-\delta_v}\left((\tau+\frac{1}{a})u\right)^{-\delta_v}(\tau u)^{\val(v)-1}\\
A^1&=& \left\{\begin{array}{ll}\sqrt{-1}^{|\mu|-l(\mu)}, &\textrm{the target is }\bP^1_a,\\
\sqrt{-1}^{|\nu|-l(\mu)}a_{\nu}\frac{(\sqrt{-1}\tau u)^{-\chi^{1}+l(\mu)+l(\nu)}}{-u-\psi^0},  &\textrm{the target is }\bP^1_a[m],m>0.
\end{array}\right.
\end{eqnarray*}
\\
\subsection{Proof of Theorem 2}
\begin{eqnarray*}
&&\xn{K}\\
&=&\frac{1}{\am|\Aut(\gamma)|}\int_{[\Mb]^{\vir}}e(V)\\
&=&\frac{1}{\am|\Aut(\gamma)|}\sum_{\chi^0+\chi^1-2l(v)=\chi}\frac{1}{|\Aut(\nu)|}\prod_{i=1}^{l(\nu)}\frac{1}{\nu_i}
\frac{a}{\frac{a}{\textrm{gcd}(a,\nu_i)}}\\
&&\cdot\int_{[\Md\times\MQ//\bC^*]^{\vir}}\frac{i_{\Gamma}^*e_T(V)}{e_T(N_{\Gamma}^{\mathrm{vir} })}\\
&=&\sqrt{-1}^{|\nu|-l(\mu)}\left(\sum_{\chi^0+\chi^1-2l(v)=\chi}G^\bullet_{\chi^0,\nu,\gamma}(\tau)_{a}
\cdot z_{\nu}
\frac{(-\sqrt{-1}\tau)^{-\chi^1 +l(\nu) +l(\mu)}}
{(-\chi^1 +l(\nu) +l(\mu))!}H^\bullet_{\chi^1,\nu,\mu}\right).
\end{eqnarray*}

Recall that
$$\xo{K}(\lambda;x)=\sqrt{-1}^{l(\mu)-d}\sum_{\chi}\lambda^{-\chi+l(\mu)}x_\gamma\xn{K}.$$
We have
$$\xo{K}(\lambda;x)=\sum_{|\nu|=|\mu|}G^\bullet_{\nu}(\lambda;\tau;x)_{a}z_{\nu}
\Phi^\bullet_{\nu,\mu}(-\sqrt{-1}\tau\lambda).$$
This finishes the proof of Theorem 2.

In the proof of Theorem 2, we used the following convention for the unstable integrals:

\begin{eqnarray*}
\int_{\Mbar_{0,(0)}(\cB\bZ_a)}\frac{1}{1-d\bar{\psi}}&=&\frac{1}{ad^2}\\
\int_{\Mbar_{0,(c,-c)}(\cB\bZ_a)}\frac{1}{(1-\mu_1\bar{\psi}_1)(1
-\mu_2\bar{\psi_2})}
&=& \frac{1}{a(\mu_1+\mu_2)}\\
\int_{\Mbar_{0,(c,-c)}(\cB\bZ_a)}\frac{1}{(1-d\bar{\psi}_1)}=\frac{1}{ad}.
\end{eqnarray*}

\subsection{Abelian groups}
We generalize the Mari\~{n}o-Vafa formula to arbitrary finite abelian groups. Let $G$ be an finite abelian group and let $R$ be an irreducible representation
$$\phi^{R}:G\to \bC^*$$
with associated short exact sequence
$$0\to K\to G\stackrel{\phi^{R}}{\to}\textrm{Im}(\phi^{R})\cong \bZ_a\to 0.$$
Then $\phi^{R}$ induces a morphism
$$\rho:\Mbar_{g, \gamma}(\cB G)\to \Mbar_{g, \phi^{R}(\gamma)}(\cB\bZ_{a}).$$
The following results are shown in \cite{JPT}.
\begin{lemma}
\begin{eqnarray*}
\bE^R&\cong &\rho^*(\bE^U)\\
 \textrm{deg}(\rho)&=&\left\{\begin{array}{ll}0,&\sum_{i=1}^{n}\gamma_{i}\neq 0,\\
|K|^{2g-1},&\sum_{i=1}^{n}\gamma_{i}=0,\end{array}\right.
\end{eqnarray*}
where $\bE^R$ (resp. $\bE^U$) denotes the Hodge bundle on $\Mbar_{g, \gamma}(\cB G)$ (resp. $\Mbar_{g, \gamma}(\cB \bZ_a)) $ corresponding to the representation $R$ (resp. $U$).
\end{lemma}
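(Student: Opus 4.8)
The plan is to treat the two assertions separately, since the first is a functoriality statement about the Hodge construction while the second is a monodromy count. The common input is the purely group-theoretic observation that $R = U\circ\phi^R$ as characters of $G$: indeed $\phi^U\colon\bZ_a\hookrightarrow\bC^*$ is the standard embedding, and composing it with the surjection $G\twoheadrightarrow\bZ_a$ recovers $\phi^R$. In particular $R$ is trivial on $K=\ker\phi^R$, a fact I will use repeatedly. Recall also that $\rho$ is defined on points by sending a $G$-cover (equivalently a map $\cC\to\cB G$, i.e.\ a $G$-torsor $\cP$) to the induced $\bZ_a$-cover $\cP/K$; this simultaneously coarsens the twisted-curve structure at the marked points and nodes, where the isotropy drops from $\langle\gamma_i\rangle$ to its image $\langle\phi^R(\gamma_i)\rangle$.

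For the isomorphism $\bE^R\cong\rho^*\bE^U$, I would first write down the universal objects: let $\pi_G\colon\cC_G\to\Mbar_{g,\gamma}(\cB G)$ and $\pi_a\colon\cC_a\to\Mbar_{g,\phi^R(\gamma)}(\cB\bZ_a)$ be the universal twisted curves, with universal $G$-torsor $\cP\to\cC_G$, and let $\cL_R=\cP\times_{G,R}\bC$ be the associated universal line bundle, so that $\bE^R$ is the cohomological pushforward $R^1\pi_{G*}\cL_R$ (or its dual, according to the convention fixing the Hodge bundle). The morphism $\rho$ carries a coarsening $c\colon\cC_G\to\rho^*\cC_a$ over $\rho$. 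Because $R$ kills $K$, the line bundle $\cL_R$ descends along $c$: it is canonically $c^*\cL_U$, where $\cL_U$ is the analogous line bundle on $\cC_a$. The fibers of $c$ over the stacky points are classifying stacks of subquotients of $K$, on which $R$ acts trivially, so $c_*\cL_R\cong\cL_U$ and $R^{>0}c_*\cL_R=0$. Composing pushforwards along $\cC_G\xrightarrow{c}\rho^*\cC_a\to\Mbar_{g,\gamma}(\cB G)$ and applying proper base change along $\rho$ then gives $R^1\pi_{G*}\cL_R\cong\rho^*R^1\pi_{a*}\cL_U$, which is exactly $\bE^R\cong\rho^*\bE^U$.

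For the degree, I would work over the Deligne--Mumford stack $\Mbar_{g,n}$ via the coarsening/forgetful maps $\epsilon_G$ and $\epsilon_a$, noting that $\rho$ commutes with them. Over a generic smooth curve the fiber of $\epsilon_G$ is the groupoid of $G$-torsors with prescribed monodromy $\gamma$, i.e.\ homomorphisms $\pi_1^{\mathrm{orb}}\to G$ sending the $i$-th loop to $\gamma_i$; since $G$ is abelian the surface relation collapses to $\sum_i\gamma_i=0$. If $\sum_i\gamma_i\neq 0$ there are no such torsors, so $\Mbar_{g,\gamma}(\cB G)$ is empty and $\deg\rho=0$. If $\sum_i\gamma_i=0$ there are $|G|^{2g}$ such homomorphisms (a free choice of the $2g$ handle generators), each torsor having automorphism group $G$, so the stacky degree of $\epsilon_G$ is $|G|^{2g}/|G|=|G|^{2g-1}$; the identical count gives $\deg\epsilon_a=a^{2g-1}$, the fact already used in the proof of Theorem~\ref{correspondence}. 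Since $\epsilon_G=\epsilon_a\circ\rho$, multiplicativity of degree yields $\deg\rho=|G|^{2g-1}/a^{2g-1}=(|G|/a)^{2g-1}=|K|^{2g-1}$, using $|G|=|K|\,a$.

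The main obstacle is the first part: making the comparison of twisted-curve structures under $\rho$ precise, i.e.\ constructing the coarsening $c\colon\cC_G\to\rho^*\cC_a$ and verifying that $\cL_R$ really is $c$-acyclic with $c_*\cL_R=\cL_U$, uniformly over the whole moduli stack including the boundary of nodal twisted curves, where one must check the behaviour of the gerbe structure at nodes as well as at the markings. Granting cohomology-and-base-change, the remainder is formal. For the degree, the only delicate point is that the generic count over $\Mbar_{g,n}$ computes the actual degree of $\rho$; this holds because $\rho$ is a finite morphism between smooth stacks of the same dimension, so the degree is locally constant and may be read off over the smooth locus.
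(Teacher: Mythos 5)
First, a point of reference: the paper gives no proof of this lemma at all --- it is quoted from \cite{JPT} (``The following results are shown in \cite{JPT}''), so your proposal cannot be checked against an argument in the paper and must stand on its own. Your first part does stand: the identity $R=U\circ\phi^R$, the descent $\cL_R\cong c^*\cL_U$ along the partial coarsening $c\colon\cC_G\to\rho^*\cC_a$ (using $c_*\cO=\cO$ and $R^{>0}c_*=0$ for tame coarsening maps), and cohomology-and-base-change (valid for $R^1$ of a flat family of curves since there is no $R^2$) give $\bE^R\cong\rho^*\bE^U$; this is essentially the standard argument, and the only caveats are the convention-matching ones you already flag.

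The degree computation, however, has a genuine gap. You obtain $\deg\rho$ as the ratio $\deg\epsilon_G/\deg\epsilon_a$. These moduli stacks are typically \emph{reducible}: components correspond to mapping-class-group orbits on the finite set of monodromy representations (already $\Mbar_{1,(0,0)}(\cB\bZ_2)$ splits into trivial-cover and connected-cover components). What the lemma asserts, and what the paper actually uses in Section 4.7, is the cycle-level identity $\rho_*[\Mbar_{g,\gamma-\bar{\mu}}(\cB G)]=|K|^{2g-1}[\Mbar_{g,\phi^R(\gamma)-\mu}(\cB\bZ_a)]$, i.e.\ fiber mass $|K|^{2g-1}$ over \emph{every} component of the target. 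Your relation $\epsilon_G=\epsilon_a\circ\rho$ only shows that the $\epsilon_a$-weighted average of the componentwise degrees of $\rho$ equals $|K|^{2g-1}$. Your appeal to local constancy of the degree does not close this: for a finite morphism between smooth equidimensional stacks the degree is indeed locally constant (miracle flatness), but that gives constancy on each connected component of the target, not equality across components. The fix --- and presumably the actual proof in \cite{JPT} --- is to compute the fibers of $\rho$ directly: over a point $[f'\colon C'\to\cB\bZ_a]$ the fiber is the groupoid of lifts of the $\bZ_a$-torsor to a $G$-torsor with prescribed monodromies $\gamma_i$; two lifts differ by a homomorphism from the closed-surface group to $K$, so when $\sum_i\gamma_i=0$ the lifts form a torsor under $K^{2g}$, and each lift has exactly $K$ worth of automorphisms inducing the identity downstairs, giving mass $|K|^{2g}/|K|=|K|^{2g-1}$ uniformly over every point of the target. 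A secondary soft spot in the same vein: your count ``each torsor having automorphism group $G$'' ignores the ghost automorphisms $\mu_{r_i}$ of the twisted curve at the stacky markings; whether these enter depends on the moduli convention (marked gerbes with or without trivialization), and in the un-rigidified convention they contribute factors that do \emph{not} cancel between $\epsilon_G$ and $\epsilon_a$, because the order of $\gamma_i$ in $G$ and the order of $\phi^R(\gamma_i)$ in $\bZ_a$ differ in general. One must fix the convention under which $\deg\epsilon_a=a^{2g-1}$ (the one the paper uses in the proof of Theorem \ref{correspondence}) and carry out the fiber count of $\rho$ in that same convention.
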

Let $\gamma=(\gamma_{1}, \cdots, \gamma_{n})$ be a vector of elements in $G$ such that $\phi^{R}(\gamma)$ is a vector of \emph{nontrivial} elements in $\bZ_a$. Let $\bar{\mu}=(\bar{\mu}_{1},\cdots,\bar{\mu}_{l(\bar{\mu})})$ be a vector of elements in $G$ such that
$$\phi^{R}(\bar{\mu})=\mu.$$
Here we view $\mu$ as a vector of elements in $\bZ_a$. $\bar{\mu}$ is required to satisfy conditions (1), (2) in section 1.2 which imply that $\Mbar_{g, \gamma-\bar{\mu}}(\cB G)$ and $\Mbar_{g, \phi^{R}(\gamma)-\mu}(\cB\bZ_a)$ are both empty or both nonempty. Then we have
\begin{eqnarray*}
&&\int_{\Mbar_{g, \gamma-\bar{\mu}}(\cB G)}\frac{\left(-\frac{1}{a}(\tau+\frac{1}{a})\right)^{-\delta_K}
\Lambda_{g}^{\vee,R}(\frac{1}{a})\Lambda_{g}^{\vee,R^\vee}(-\tau-\frac{1}{a})
\Lambda_{g}^{\vee,1}(\tau )}{\prod_{i=1}^{l(\mu)}(1-\mu_{i}\bar{\psi}_{i})}\\
&=&|K|^{2g-1}\int_{\Mbar_{g, \phi^R(\gamma)-\mu}(\cB\bZ_a)}\frac{\left(-\frac{1}{a}(\tau+\frac{1}{a})\right)^{-\delta}
\Lambda_{g}^{\vee,U}(\frac{1}{a})\Lambda_{g}^{\vee,U^\vee}(-\tau-\frac{1}{a})
\Lambda_{g}^{\vee,1}(\tau )}{\prod_{i=1}^{l(\mu)}(1-\mu_{i}\bar{\psi}_{i})}.
\end{eqnarray*}
Therefore
\begin{eqnarray*}
G_{g,\mu,\gamma}(\tau)_{a,\phi^R}&=&|K|^{2g-1}G_{g,\mu,\phi^{R}(\gamma)}(\tau)_{a},\\
G_{\mu,\gamma}(\lambda;\tau)_{a,\phi^R}&=&\sum_{g=0}^{\infty}
\lambda^{2g-2+l(\mu)}|K|^{2g-1}G_{g,\mu,\phi^{R}(\gamma)}(\tau)_{a}\\
&=&\frac{|K|}{|K|^{l(\mu)}}G_{\mu,\phi^R(\gamma)}(|K|\lambda;\tau)_{a},\\
G_\gamma(\lambda;\tau;p)_{a,\phi^R}&=&\sum_{\mu\neq \emptyset} \frac{|K|}{|K|^{l(\mu)}}G_{\mu,\phi^R(\gamma)}(|K|\lambda;\tau)_{a}p_{\mu}\\
&=&|K|G_{\phi^R(\gamma)}(|K|\lambda;\tau;\frac{p_1}{|K|},\frac{p_2}{|K|},\cdots)_{a}\\
&=&|K|R_{\phi^R(\gamma)}(|K|\lambda;\tau;\frac{p_1}{|K|},\frac{p_2}{|K|},\cdots)_{a}\\
&=&|K|R_{\phi^R(\gamma)}(|K|\lambda;\tau;\frac{p}{|K|})_{a}.
\end{eqnarray*}
This finishes the proof of Theorem 3.

\section{Application to the local Gromov-Witten theory of orbi-curves}
\subsection{Definitions and some known facts}
Let $(X,p_1,\cdots,p_r,q_1,\cdots,q_s)$ be a fixed non-singular genus $g$ orbi-curve with stack points $p_1,\cdots,p_r$ of orders $a_1,\cdots,a_r$ and with ordinary points $q_1,\cdots,q_s$. Let $\alpha^1,\cdots,\alpha^s$ be partitions of $d>0$. Let $\gamma^1,\cdots,\gamma^r$ be vectors of nontrivial elements in $\bZ_{a_1},\cdots,\bZ_{a_r}$ respectively. Then there is a corresponding moduli space of relative stable maps
$$\Mbar^\bullet_\gamma(X,(\alpha^1,\cdots,\alpha^s))$$
parameterizing stable maps from possibly disconnected curves to $X$ ramified over $q_i$ with ramification type $\alpha^i$ and with marked points with monodromies $\gamma=\gamma^1+\cdots+\gamma^r$.

Let
$$\pi:\cU\to \Mbar^\bullet_\gamma(X,(\alpha^1,\cdots,\alpha^s))$$
be the universal domain curve and let
$$P:\cT\to\Mbar^\bullet_\gamma(X,(\alpha^1,\cdots,\alpha^s))$$
be the universal target. Then there is an evaluation map
$$F:\cU\to \cT.$$
Let $\cQ$ be the universal prescribed branch divisor and let $\cD$ be the universal ramification divisor. Let
$$I(X,\overrightarrow{\alpha})=R^1\pi_{*}(F^*(\omega(\cQ))\oplus \cO(-\cD))-R^0\pi_{*}(F^*(\omega(\cQ))\oplus \cO(-\cD))$$
where $\omega$ is the relative dualizing sheaf of $P$, $\overrightarrow{\alpha}=(\alpha^1,\cdots,\alpha^s)$. Then the \emph{relative local invariants} are defined by
$$Z^{b,\gamma}_d(g)_{\overrightarrow{\alpha},\overrightarrow{a}}=\frac{1}{|\Aut(\gamma)||\Aut(\alpha^1)|\cdots|\Aut(\alpha^s)|}
\int_{[\Mbar^\bullet_\gamma(X,\overrightarrow{\alpha})]^{\vir}}c_b(I(X,\overrightarrow{\alpha}))$$
and their corresponding generating functions are defined by
$$Z_d(g)(\lambda;x)_{\overrightarrow{\alpha},\overrightarrow{a}}=
\sum_{b,\gamma}Z^{b,\gamma}_d(g)_{\overrightarrow{\alpha},\overrightarrow{a}}\lambda^bx_\gamma$$
where $\overrightarrow{a}=(a_1,\cdots,a_r)$. The only difference between our definition and the non-orbifold definition in \cite{Bry-Pan1} is that there are two additional indices $\overrightarrow{a}$ and $\gamma$ in our definition. Note that $I(X,\overrightarrow{\alpha})$ has rank $b$ when restricted to the components of $\Mbar^\bullet_\gamma(X,\overrightarrow{\alpha})$ with virtual dimension $b$.

We use the notation $\mu\vdash d$ to indicate  $\mu$ is a partition of $d$.
The following proposition is the orbifold version of the gluing law shown in \cite{Bry-Pan1}.
\\
\begin{proposition}\label{gluing}
Let $\overrightarrow{\alpha}=(\alpha^1,\cdots,\alpha^s)$ and $\overrightarrow{a}=(a_1,\cdots,a_r)$. For any choice $g=g_1+g_2$ and any splitting
\begin{eqnarray*}
\{\alpha^1,\cdots,\alpha^s\}&=&\{\alpha^1,\cdots,\alpha^k\}\cup \{\alpha^{k+1},\cdots,\alpha^s\}\\
\{a_1,\cdots,a_r\}&=&\{a_1,\cdots,a_l\}\cup \{a_{l+1},\cdots,a_r\}
\end{eqnarray*}
we have
$$Z_d(g)(\lambda;x)_{\overrightarrow{\alpha},\overrightarrow{a}}=\sum_{\mu\vdash d}z_\mu Z_d(g_1)(\lambda;x)_{(\alpha^1,\cdots,\alpha^k,\mu),(a_1,\cdots,a_l)}
Z_d(g_2)(\lambda;x)_{(\alpha^{k+1},\cdots,\alpha^s,\mu),(a_{l+1},\cdots,a_r)}.$$
\end{proposition}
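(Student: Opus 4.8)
The plan is to deduce this gluing law from a degeneration of the target orbi-curve, in direct parallel with the proof of the gluing law in \cite{Bry-Pan1}. The essential new feature here is the orbifold structure, but since the gluing is performed at an \emph{ordinary} point rather than at one of the stack points $p_1,\ldots,p_r$, that structure will play only a passive role: the stack points and their attached monodromy data will simply be partitioned between the two pieces, so that the orbifold aspects are inert with respect to the node.

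First I would construct a one-parameter degeneration $\mathcal{X}\to{\Bbb A}^1$ whose general fiber is $X$ and whose special fiber is a nodal orbi-curve $X_1\cup_{x_0}X_2$, where $X_1$ has genus $g_1$ and carries the stack points $p_1,\ldots,p_l$ (with monodromies $\gamma^1,\ldots,\gamma^l$) together with the ordinary relative points $q_1,\ldots,q_k$, while $X_2$ has genus $g_2$ and carries the remaining points, and $x_0$ is a single ordinary node. Since $x_0$ is not a stack point, the root/orbifold structure on $\mathcal{X}$ is locally trivial near $x_0$, so the degeneration leaves the stacky geometry untouched and reduces the problem to the situation of \cite{Bry-Pan1} with the monodromy-carrying points treated as spectators.

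Next I would apply the degeneration formula for relative (orbifold) Gromov-Witten invariants to the family $\mathcal{X}\to{\Bbb A}^1$. A relative stable map to the special fiber is obtained by gluing a relative stable map to $X_1$ and a relative stable map to $X_2$ along a common ramification profile over $x_0$, which is exactly a partition $\mu\vdash d$; the degeneration formula then writes $[\Mbar^\bullet_\gamma(X,\overrightarrow{\alpha})]^{\vir}$ as a sum over $\mu$ of a product of the two relative virtual classes. The step requiring care is to carry the obstruction bundle $I(X,\overrightarrow{\alpha})=R^1\pi_*-R^0\pi_*$ of $F^*(\omega(\cQ))\oplus\cO(-\cD)$ through this degeneration. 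Using the normalization sequence for the domain curve at the node, the node $x_0$ becomes an additional relative point on each of $X_1$ and $X_2$ carrying the profile $\mu$: concretely $\omega_{X_1\cup X_2}$ restricts to $\omega_{X_i}(x_0)$ on each component, so $F^*(\omega(\cQ))$ acquires $x_0$ as part of its relative data, and the obstruction theory factors as $I(X,\overrightarrow{\alpha})\cong I(X_1,\ldots,\mu)\oplus I(X_2,\ldots,\mu)$. This is precisely why $\mu$ enters as a new entry in each of $Z_d(g_1)(\ldots,\mu)$ and $Z_d(g_2)(\ldots,\mu)$; consequently the total Chern class $c_b$ splits as a product and the degree is additive, $b=b_1+b_2$.

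Finally I would assemble the automorphism and gluing factors. Gluing the ramification data over the node produces the combinatorial weight $z_\mu=|\Aut(\mu)|\mu_1\cdots\mu_{l(\mu)}$ once the normalizations $\tfrac{1}{|\Aut(\gamma)|\,\prod_i|\Aut(\alpha^i)|}$ in the definition of $Z^{b,\gamma}_d$ are reconciled on the two sides; summing over $b=b_1+b_2$ and over the induced splitting of $\gamma$, then multiplying the generating series in $\lambda$ and $x$, yields exactly the claimed identity. I expect the main obstacle to be precisely this last bookkeeping together with the verification that $I(X,\overrightarrow{\alpha})$ degenerates with the node contributing nothing beyond the relative condition $\mu$ and that the resulting gluing weight is exactly $z_\mu$; with the orbifold points acting only as spectators away from $x_0$, the argument of \cite{Bry-Pan1} should otherwise apply essentially verbatim.
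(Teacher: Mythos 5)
Your proposal is correct and follows essentially the same route as the paper: the paper's proof simply observes that, since the gluing node is an ordinary point, the stack points and their monodromy data are passively split between the two components, and then invokes verbatim the degeneration-formula argument of \cite{Bry-Pan1} — which is exactly the argument you have spelled out, including the splitting of the obstruction bundle $I(X,\overrightarrow{\alpha})$ and the gluing weight $z_\mu$.
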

The only difference between Proposition \ref{gluing} and the gluing law in \cite{Bry-Pan1} is that there is an additional splitting of $\overrightarrow{a} =(a_1,\cdots,a_r)$. The proof of Proposition \ref{gluing} is the same as that of the gluing law in \cite{Bry-Pan1}.

\subsection{Calculation of $Z_d(0)(\lambda;x)_{(\mu),(a)}$}

In \cite{Bry-Pan2}, the local Gromov-Witten theory of ordinary curves is solved even without imposing the Calabi-Yau condition on the obstruction bundle. So in order to determine the relative local invariants of all $(X,p_1,\cdots,p_r,q_1,\cdots,q_s)$, we only need to calculate $Z_d(0)(\lambda;x)_{(\mu),(a)}$ because of the gluing law.

Recall that we have defined
$$\xn{K}=\frac{1}{|\Aut(\mu)||\Aut(\gamma)|}\int_{[\Mb]^{\vir}}e(V)$$
with $V$ the obstruction bundle and the generating function
$$\xo{K}(\lambda;x)=\sqrt{-1}^{l(\mu)-d}\sum_{\chi,\gamma}\lambda^{-\chi+l(\mu)}x_\gamma\xn{K}.$$
So we have
$$\xn{K}=Z^{b,\gamma}_d(0)_{(\mu),(a)},$$
where $b=-\chi+n+l(\mu)+\frac{d}{a}-\sum_{i=1}^{n}\frac{\gamma_i}{a}$ and $n=l(\gamma)$. Therefore, we have
\begin{eqnarray*}
\lambda^{-\frac{d}{a}}Z_d(0)(\lambda;\lambda^{\frac{1}{a}-1}x_1,\cdots,\lambda^{\frac{a-1}{a}-1}x_{a-1})
_{(\mu),(a)}&=&\sum_{b,\gamma}Z^{b,\gamma}_d(0)_{(\mu),(a)}\lambda^{-\chi+l(\mu)}x_\gamma\\
&=&\sum_{\chi,\gamma}\xn{K}\lambda^{-\chi+l(\mu)}x_\gamma\\
&=&\sqrt{-1}^{d-l(\mu)}\xo{K}(\lambda;x).
\end{eqnarray*}
Recall that
$$\xo{K}(\lambda;x)=G^\bullet_{\mu}(\lambda;0;x)_{a}=(q^{\frac{1}{2}}q_1^{-\frac{1}{a}}\cdots q_{a-1}^{-\frac{a-1}{a}})^{|\mu|}\sum_{|\nu|=|\mu|}s_{\nu'}(-q_{\bullet})\frac{\chi_{\nu}(\mu)}{z_\mu},$$
where the change of variables is given in Theorem \ref{correspondence}. Therefore, we obtain the expression for $Z_d(0)(\lambda;x)_{(\mu),(a)}$:
\begin{eqnarray*}
\lambda^{-\frac{d}{a}}Z_d(0)(\lambda;\lambda^{\frac{1}{a}-1}x_1,\cdots,\lambda^{\frac{a-1}{a}-1}x_{a-1})
_{(\mu),(a)}=\sqrt{-1}^{d-l(\mu)}(q^{\frac{1}{2}}q_1^{-\frac{1}{a}}\cdots q_{a-1}^{-\frac{a-1}{a}})^{|\mu|}\sum_{|\nu|=|\mu|}s_{\nu'}(-q_{\bullet})\frac{\chi_{\nu}(\mu)}{z_\mu}.
\end{eqnarray*}
This finishes the proof of Theorem 4.

\bigskip


\begin{thebibliography}{99}

\bibitem{Abr-Gra-Vis} D.~Abramovich, T.~Graber \& A.~Vistoli,
{\em Gromov-Witten theory for Deligne-Mumford stacks}, Amer. J. Math. {\bf 130} (2008), no. 5, 1337--1398, MR2450211, Zbl 1193.14070.

\bibitem{AKMV}
M.~Aganagic, A.~Klemm, M.~Marino \& C.~Vafa,
{\em The topological vertex}, Comm. Math. Phys. {\bf 254} (2005), no. 2, 425--478, MR2117633, Zbl 1114.81076.

\bibitem{Bry} J.~Bryan, private communication.

\bibitem{Bry-Cad-You} J.~Bryan, C.~Cadman \& B.~Young,
{\em The orbifold topological vertex},
Advances in Mathematics {\bf 229} (2012), no. 1, 531--595, MR2854183, Zbl 1250.14027.

\bibitem{Bry-Gra-Pan} J.~Bryan, T.~Graber \& R.~Pandharipande,
{\em The orbifold quantum cohomology of $\bC^2/\bZ_3$ and Hurwitz-Hodge integrals}, J. Alg. Geom. {\bf 17} (2008), 1--28, MR2483931, Zbl 1129.14075.

\bibitem{Bry-Pan1} J.~Bryan \& R.~Pandharipande,
{\em Curves in Calabi-Yau 3-folds and topological quantum field theory}, Duke J. {\bf 126} (2005), 369--396, MR2115262, Zbl 1084.14053.

\bibitem{Bry-Pan2} J.~Bryan \& R.~Pandharipande,
{\em The local Gromov-Witten theory of curves}, JAMS {\bf 21} (2008), 101--136, MR2350052, Zbl 1126.14062.

\bibitem{Cad} C.~Cadman,
{\em Using stacks to impose tangency conditions on curves}, Amer. J. Math. {\bf 129} (2007), 405--427, MR2306040, Zbl 1127.14002.

\bibitem{Chen-Ruan} W.~Chen \& Y.~Ruan,
{\em Orbifold Gromov-Witten theory}, Orbifold in mathematics and physics (Madison, WI, 2001), 2002, MR1950941, Zbl 1091.53058.

\bibitem{Dij}
R.~Dijkgraaf,
{\em Mirror symmetry and elliptic curves},  The moduli space of curves,
R.~Dijkgraaf, G.~van der Geer (editors), Prog. in Math. {\bf 129}, Birkha\"user, 1995, MR1363055, Zbl 0913.14007.

\bibitem{Fab-Pan}
C.~Faber \& R.~Pandharipande,
{\em Hodge integrals and Gromov-Witten theory},
Invent. Math. {\bf 139} (2000), no. 1, 173--199, MR1728879, Zbl 0960.14031.

\bibitem{Gra-Pan}
T.~Graber \& R.~Pandharipande,
{\em Localization of virtual classes},
Invent. Math. {\bf 135} (1999), no. 2, 487--518, MR1666787, Zbl 0953.14035.

\bibitem{Joh} P.~Johnson,
{\em Equivariant Gromov-Witten theory of one dimensional stacks}, PhD thesis, 2009, MR2713030.

\bibitem{JPT} P.~Johnson, R.~Pandharipande \& H.-H.~Tseng,
{\em Abelian Hurwitz-Hodge integrals}, Michigan Math. J. {\bf 60} (2011), no. 1, 171--198, MR2785870, Zbl 1222.14119.

\bibitem{Joh-Pan-Tse} P.~Johnson, R.~Pandharipande \& H.-H.~Tseng,
{\em Notes on local $\bP^1$-orbifolds}, preprint.

\bibitem{Li1}
J.~Li,
{\em Stable Morphisms to singular schemes and relative stable morphisms},
J. Diff. Geom. {\bf 57} (2001), 509--578, MR1882667, Zbl 1076.14540.

\bibitem{Li2}
J.~Li,
{\em A degeneration formula of GW-invariants},
J. Diff. Geom. {\bf 60} (2002), 199--293, MR1938113, Zbl 1063.14069.

\bibitem{Li-Liu-Liu-Zhou}J.~Li, C.-C.~Liu, K.~Liu \& J.~Zhou,
{\em A mathematical theory of the topological vertex},
Geom. Topol. {\bf 13} (2009), 527--621, MR2469524, Zbl 1184.14084.

\bibitem{Liu} C.-C.~Liu,
{\em Formulae of one-partition and two-partition Hodge integrals},
Geometry and Topology Monographs {\bf 8} (2006), 105--128, MR2883689, Zbl 1278.14070.

\bibitem{Liu2} C.-C.~Liu,
{\em Localization in Gromov-Witten theory and orbifold Gromov-Witten theory},
Handbook of moduli, vol. II, 353--425, Adv. Lect. Math. (ALM) \textbf{25}, Int. Press, Somerville, MA, 2013, MR3184181.

\bibitem{Liu-Liu-Zhou1} C.-C.~Liu, K.~Liu \& J.~Zhou,
{\em A proof of a conjecture of Mari\~{n}o-Vafa on Hodge integrals}, J. Differential Geom. {\bf 65} (2003), no. 2, 289--340, MR2058264, Zbl 1077.14084.

\bibitem{Liu-Liu-Zhou2} C.-C.~Liu, K.~Liu \& J.~Zhou,
{\em A formula of two-parition Hodge integrals}, J. Amer. Math. Soc. {\bf 20(1)} (2007), 149--184, MR2257399, Zbl 1111.14053.

\bibitem{Liu-Liu-Zhou3} C.-C.~Liu, K.~Liu \& J.~Zhou,
{\em Mari\~{n}o-Vafa formula and Hodge integral identities}, J. Alg. Geom. {\bf 15} (2006), 379--398, MR2199062, Zbl 1102.14018.

\bibitem{Liu-Liu-Zhou4} C.-C.~Liu, K.~Liu \& J.~Zhou,
{\em On a proof of a conjecture of Mari\~{n}o-Vafa on Hodge integrals},
Math. Res. Lett. {\bf 11} (2004), 259--272, MR2067471, Zbl 1094.14043.

\bibitem{Mac}
I.~G.~Macdonald,
{\em Symmetric functions and Hall polynomials},
Oxford Mathematical Monographs. The Clarendon Press Oxford University Press, New York, second edition, 1995. With contributions by A. Zelevinsky, Oxford Science Publications, MR1354144, Zbl 0899.05068.

\bibitem{Mar-Vaf}
M.~Mari\~{n}o \& C.~Vafa,
{\em Framed knots at large $N$},
Orbifolds in mathematics and physics (Madison, WI, 2001),
185--204, Contemp. Math. \textbf{310}, Amer. Math. Soc., Providence, RI, 2002, MR1950947, Zbl 1042.81071.


\bibitem{MNOP1}
D.~Maulik, N.~Nekrasov, A.~Okounkov \& R.~Pandharipande,
{\em Gromov-Witten theory and Donaldson-Thomas theory, I},
Compos. Math. \textbf{142(5)}, 1263--1285, 2006, MR2264664, Zbl 1108.14046.

\bibitem{MNOP2}
D.~Maulik, N.~Nekrasov, A.~Okounkov \& R.~Pandharipande,
{\em Gromov-Witten theory and Donaldson-Thomas theory, II},
Compos. Math. \textbf{142(5)}, 1286--1304, 2006, MR2264665, Zbl 1108.14047.

\bibitem{MOOP}
D.~Maulik, A.~Oblomkov, A.~Okounkov \& R.~Pandharipande,
{\em Gromov-Witten/Donaldson-Thomas correspondence for toric 3-folds},
Invent. Math. \textbf{186} (2011), no. 2, 435--479, MR2845622, Zbl 1232.14039.

\bibitem{Mum}
D.~Mumford,
{\em Towards an enumerative geometry of the moduli space of curves},
Arithmetic and Geometry (M.~Artin, J.~Tate, eds.), Part II, Birkh\"auser, 1983,
271--328, MR0717614, Zbl 0554.14008.

\bibitem{Oko-Pan} A.~Okounkov \& R.~Pandharipande,
{\em Hodge integrals and invariants of the unknots},
Geom. Topol. {\bf 8} (2004), 675--699, MR2057777, Zbl 1062.14035.

\bibitem{Oko-Vaf} A. Okounkov, N.~Reshetikhin \& C.~Vafa,
{\em Quantum Calabi-Yau and classical crystals},
The unity of mathematics, 597--618, Progr. Math. \textbf{244}, Birkh\"{a}user Boston, Boston, MA, 2006, MR2181817, Zbl 1129.81080.

\bibitem{Ros} D.~Ross,
{\em Localization and gluing of orbifold amplitudes: the Gromov-Witten orbifold vertex},
Trans. Amer. Math. Soc. \textbf{366} (2014), no. 3, 1587--1620, MR3145743, Zbl 06268336.

\bibitem{Zhou}
J.~Zhou,
{\em Hodge integrals, Hurwitz numbers, and symmetric groups},
arXiv: math. AG/0308024.

\end{thebibliography}
\end{document}